\definecolor{lightgray}{gray}{0.9}
\newtheorem{lemma}{Lemma}
\newtheorem{theorem}{Theorem}
\newtheorem{assumption}{Assumption}
\newtheorem{definition}{Definition}
\newtheorem{remark}{Remark}
\newtheorem{corollary}{Corollary}
\newacronym{opf}{OPF}{optimal power flow}
\newacronym{pf}{PF}{Power Flow}
\newacronym{qp}{\uppercase {qp}}{quadratic program}
\newacronym{nlp}{\uppercase {nlp}}{nonlinear programming}
\newacronym{rapidpf}{rapid\uppercase {pf}}{rapid prototyping for distributed Power Flow}
\newacronym{admm}{\uppercase {admm}}{Alternating Direction Method of Multipliers}
\newacronym{aladin}{\uppercase {aladin}}{Augmented Lagrangian-based Alternating Direction Inexact Newton method}
\newacronym{ocd}{\uppercase {ocd}}{Optimality Condition Decomposition}
\newacronym{app}{\uppercase {app}}{Auxiliary Problem Principle}
\newacronym{sqp}{\uppercase {sqp}}{sequential quadratic programming}
\newacronym{kahip}{KaHIP}{Karlsruhe High Quality Partitioning}
\newacronym{kaffpa}{KaFFPa}{Karlsruhe Fast Flow Partitioner}
\newacronym{soc}{SOC}{second-order correction}
\newacronym{pegase}{PEGASE}{Pan European Grid Advanced Simulation and State Estimation}
\newacronym{spmd}{SPMD}{single program multiple data}
\newacronym{ad}{AD}{automatic differentiation}
\newacronym{std}{STD}{standard}
\newacronym{api}{API}{active power increased}
\newacronym{sad}{SAD}{small angle difference}
\newacronym{ders}{DERs}{distributed energy resources}
\newacronym{itd}{ITD}{integrated transmission-distribution}
\newacronym{licq}{LICQ}{linear independence constraint qualification} 
\newacronym{sosc}{SOSC}{second order sufficient condition} 
\newacronym{scc}{SCC}{strict complementarity conditions} 
\newacronym{kkt}{KKT}{Karush–Kuhn–Tucker} 
\newacronym{bfm}{BFM}{branch flow model} 
\newacronym{bim}{BIM}{bus injection model}
\newcommand{\matlab}{\textsc{matlab}\xspace}
\newcommand{\casadi}{\textsc{c}as\textsc{ad}i\xspace}
\newcommand{\ipopt}{\textsc{ipopt}\xspace}
\newcommand{\mab}{\textsc{MA}57\xspace}
\newcommand{\maa}{\textsc{MA}27\xspace}
\newcommand{\mac}{\textsc{MA}77\xspace}
\newcommand{\mad}{\textsc{MA}87\xspace}
\newcommand{\mae}{\textsc{MA}97\xspace}
\newcommand{\baladin}{\textsc{baladin}\xspace}
\newacronym{baladin}{BALADIN}{Barrier ALADIN} 
\newcolumntype{K}[1]{>{\raggedleft\arraybackslash}p{#1}}
\newtheoremstyle{note}% name
{0pt}% Space above
{0pt}% Space below
{}% Body font
{}% Indent amount
{\itshape}% Theorem head font
{:}% Punctuation after theorem head
{.5em}% Space after theorem head
{}% Theorem head spec (can be left empty, meaning ‘normal’)
\newcommand{\RN}[1]{%
\textup{\uppercase\expandafter{\romannumeral#1}}%
}
\newcommand{\change}{\textcolor{black}}
\begin{document}

\title{Distributed AC Optimal Power Flow: A Scalable Solution for Large-Scale Problems}

\author{Xinliang Dai~\IEEEmembership{Member,~IEEE}, Yuning Jiang~\IEEEmembership{Member,~IEEE}, Yi Guo~\IEEEmembership{Member,~IEEE},\\ 
Colin N. Jones~\IEEEmembership{Senior Member,~IEEE}, Moritz Diehl~\IEEEmembership{Member,~IEEE}, Veit Hagenmeyer~\IEEEmembership{Member,~IEEE}

\thanks{This work was supported in part by the BMBF-project ENSURE III with grant number 03SFK1F0-3, in part by the Swiss National Science Foundation (SNSF) under the NCCR Automation project, grant agreement 51NF40\_180545, and in part by the Swiss Federal Office of Energy’s “SWEET” programme, and in part by BMWK via 03EI4057A (GrECCo). (Corresponding authors: Yuning Jiang and Yi Guo)}
\thanks{X.~Dai was with the Institute for Automation and Applied Informatics, Karlsruhe Institute of Technology, Germany. He is now with the Andlinger Center for Energy and the Environment, Princeton University, USA. (e-mail: {\tt xinliang.dai@princeton.edu})}
\thanks{Y. Jiang and Colin N. Jones are with Automatic Control Laboratory, EPFL, Switzerland. Y. Jiang is also with the Institute for Transport Planning and Systems at ETH Zurich, Switzerland. (e-mail: {\tt yuning.jiang@ieee.org, colin.jones@epfl.ch})}
\thanks{Y.~Guo is with the School of Automation, Beijing Institute of Technology, Beijing, China. (e-mail: {\tt yi.guo@ieee.org})}
\thanks{M.~Diehl is with the Department of Microsystems Engineering (IMTEK) and Department of Mathematics, University of Freiburg, Germany. (e-mail: {\tt moritz.diehl@imtek.uni-freiburg.de})}
\thanks{V.~Hagenmeyer is with the Institute for Automation and Applied Informatics, Karlsruhe Institute of Technology, Germany. 
(e-mail: {\tt veit.hagenmeyer@kit.edu})}
}

\maketitle

\begin{abstract}
This paper introduces a novel distributed optimization framework for large-scale AC Optimal Power Flow (OPF) problems, offering both theoretical convergence guarantees and rapid convergence in practice. By integrating smoothing techniques and the Schur complement, the proposed approach addresses the scalability challenges and reduces communication overhead in distributed AC \acrshort{opf}. Additionally, optimal network decomposition enables efficient parallel processing under the \acrfull{spmd} paradigm. 
Extensive simulations on large-scale benchmarks across various operating scenarios indicate that the proposed framework is 2 to 5 times faster than the state-of-the-art centralized solver IPOPT on modest hardware. This paves the way for more scalable and efficient distributed optimization in future power system applications.

%\acrfull{baladin}

\end{abstract}

\begin{IEEEkeywords}
Distributed optimization, optimal power flow, large-scale problems.
\end{IEEEkeywords}

\IEEEpeerreviewmaketitle

\section{Introduction}\label{sec::Intro}
%\renewcommand*\rot[2]{\multicolumn{1}{R{#1}{#2}}}% no optional argument here, please!
%\rot{45}{}{Algorithm} 

The AC \acrfull{opf} problem is one of the most practically important optimization problems in electric power systems engineering~\cite{frank2016introduction}. It is generally NP-hard, even for radial power grids~\cite{lehmann2015ac,bienstock2019strong}.  Traditionally, this problem is solved by using centralized approaches, primarily for long-term scheduling and planning purposes~\cite{frank2016introduction}. However, the ongoing energy transition presents new challenges for these centralized paradigms. The increasing integration of \acrfull{ders} introduces rapid fluctuations in energy supply and demand, complicating the management of transmission and distribution systems and requiring enhanced coordination among system operators~\cite{muhlpfordt2021distributed}. Additionally, as more controllable devices are installed in power systems, centralizing all data not only raises significant privacy concerns but also places greater demands on communication infrastructure~\cite{patari2021distributed}. 

In response to these challenges, distributed optimization offers an efficient alternative for coordinating geographically dispersed systems, allowing independent operation and effective collaboration through limited information sharing. Potential advantages of distributed optimization algorithms in power systems include~\cite{muhlpfordt2021distributed,molzahn2017surveyDistr,patari2021distributed}:
\begin{itemize}[leftmargin=15pt]
\renewcommand{\labelitemi}{$\blacksquare$}
\item \textbf{Scalability}: Distributed algorithms decompose large, complex problems into smaller subproblems. This enables fast computations and makes the approach scalable.
\item \textbf{Privacy and Sovereignty Preservation}: With distributed optimization, data privacy is better preserved because detailed information, such as grid configurations or customer usage behaviors: does not need to be shared. Each entity can maintain the confidentiality of its own data, which is crucial in collaborations where different entities own and operate separate parts of the system.
\item \textbf{Robustness}: Distributed approaches enhance system reliability and resiliency by mitigating the risk of single-point cyber failures that centralized systems are prone to.
\item \textbf{Adaptability}: These algorithms adapt more quickly to network topology and infrastructure changes without requiring a complete system overview. This enables flexible reconfiguration of the system when new components are added or existing ones are modified.
\end{itemize}
Comprehensive surveys on distributed optimization can be found in~\cite{molzahn2017surveyDistr,yang2019survey,patari2021distributed,al2023distributed}.

To solve nonconvex AC \acrshort{opf} problems in a distributed manner, existing methodologies often exploit specific network architectures. For instance, several approaches~\cite{zheng2015fully,peng2016distributed,rajaei2021decentralized} are restricted to fully radial networks (Type V in Table~\ref{TB::Formulation}), which facilitates the use of convex relaxations based on the \acrfull{bfm}. Other studies focus on integrated transmission-distribution (ITD) systems configured in a star-like topology (Types III and IV). These methods typically employ a master-worker splitting framework~\cite{sun2014masterslave}, where worker nodes solve decoupled local subproblems while a central master iteratively coordinates boundary variables. Because the master problem itself does not require convexity, this framework can accommodate meshed subnetworks via the \acrfull{bim}~\cite{LinChenhui2020,wang2022nested}.

\begin{table}[htp!]
\caption{Types of distributed AC OPF Problems} \label{TB::Formulation}\vspace{-6pt}
\centering
%\scriptsize
\footnotesize
\sisetup{separate-uncertainty, table-format=1.4(5), detect-all}
\setlength{\tabcolsep}{5pt}
\renewcommand{\arraystretch}{1.1}
\begin{tabular}{ccccc}\toprule
%\multirow{2}{*}{Type} &\multirow{2}{*}{Network} & \multirow{2}{*}{Topology}  & \multirow{2}{*}{Problem}  \\
Type & Network & Topology & Model & Problem Type \\\toprule
$\RN{1}$ &Partitioned & Generic & BIM & Nonconvex  \\
$\RN{2}$ &Merged & Generic & BIM & Nonconvex \\
$\RN{3}$ &Merged & Stellate & BIM & Nonconvex\\
$\RN{4}$ &Merged & Stellate & Hybrid & Partially Convexified\\
$\RN{5}$ &Merged & Stellate & BFM & Convexified\\
\bottomrule
\end{tabular}
\end{table}
\begin{table}[htp!]
\centering
%\scriptsize
\footnotesize
\sisetup{separate-uncertainty, table-format=1.4(5), detect-all}
\caption{Distributed optimization for solving AC OPF} \label{TB::Distri_Opt}\vspace{-6pt}
\noindent    
\setlength{\tabcolsep}{1pt} % Default value: 6pt
\def\arraystretch{0.8}%  1 is the default, change whatever you need
\begin{tabular}{ccccccccccc}\toprule
%        \multirow{2}{*}{Ref.} & \multirow{2}{*}{Algorithm}  & \multirow{2}{*}{Feature} & Convergence & \multirow{2}{*}{$N_\text{bus}$\footnote{* represents the corresponding test case is merged by multiple cases}}  & \multirow{2}{*}{Model} &  \multirow{2}{*}{Topology} &  \multirow{2}{*}{Execution}  &  \multirow{2}{*}{Speed} &  \multirow{2}{*}{Accuracy} \\
\multirow{2}{*}{Ref.} & \multirow{2}{*}{Type} & \multirow{2}{*}{Algorithm} & Global/Local &  \multirow{2}{*}{$N^\text{bus}$}  &    \multirow{2}{*}{Execution}  &  \multirow{2}{*}{Speed} &  \multirow{2}{*}{Accuracy} \\
&  & & Convergence&&&\\\toprule
% \cite{LiZhengshuo2018} & & HGD & - &$10^2$ & Sequential & ++ &+++\\ 
\cite{zheng2015fully,peng2016distributed,rajaei2021decentralized} & $\RN{5}$ & ADMM & - &  $10^3$  & Sequential & + & +\\
\cite{LinChenhui2020,wang2022nested} & $\RN{4}$ & DCC &  Global &$10^3$ & Sequential & +++ &+++\\  
\cite{tu2020two} & $\RN{3}$ & Distr. IPM &  Local&$10^7$ & Parallel & +++ &\\
\cite{engelmann2021essentially} & $\RN{2}$ & Distr. IPM &  Local &$10^2$ & Sequential & +++ & +++\\
\cite{lu2017fully} & $\RN{2}$ & Distr. IPM &  - &$10^4$ & Sequential & ++ & +++\\
\cite{dai2023itd} & $\RN{2}$ & ALADIN & Global &$10^3$ & Sequential & +++ &+++\\\hline    
\cite{6748974} & $\RN{1}$ & ADMM & -   &$10^2$  & Sequential & + & +\\
\cite{Guo2017}& $\RN{1}$ & ADMM &  - &  $10^3$ & Sequential & + & +\\
\cite{mhanna2018adaptive}  & $\RN{1}$  & ADMM & - &$10^4$  & Sequential & + & + \\
\cite{sun2021two,sun2023two} & $\RN{1}$ & ADMM &  Global &$10^4$ & Parallel & + & +\\\hline
\cite{engelmann2017distributed,Engelmann2019} & $\RN{1}$ & ALADIN &  Global &$10^2$  & Sequential & +++ &+++\\
\cite{dai2023easimov} & $\RN{1}$ & ALADIN & Global &$10^2$ & Distributed & ++ &+++\\
\cite{engelmann2020decomposition} & $\RN{1}$ & ALADIN& Global &$10^1$ & Sequential & +++ &+++\\
This work & $\RN{1}$ & BALADIN & Global &$10^5$ & Parallel & ++++ &+++\\
\bottomrule
\end{tabular}%\vspace{-8pt}
\end{table}
\change{
Some distributed primal-dual interior-point methods have been developed for DC OPF~\cite{minot2016parallel} and for convex problems~\cite{tran2013combining,khoshfetrat2017distributed}. More recent work relaxes convexity requirements~\cite{tu2020two,Ali2024DistributedPI}, but these approaches are typically restricted to a star-like architecture (Type~III). In addition, their convergence and runtime can be highly sensitive to the complexity of the master problem. Related studies~\cite{dai2023itd,engelmann2021essentially,lu2017fully} move toward more complex connection settings (Type II), and validate performance on small-to-medium cases with only a few interconnecting lines. This leaves the practical impact of dense coupling across regions that remains insufficiently explored.}

\change{In contrast, this work targets the full nonconvex AC OPF at large scale and compares distributed algorithms systematically across different decompositions and coupling densities. We partition the network into interconnected subgrids (Type I), which captures realistic meshed coupling and does not rely on a master-worker hierarchy.} By reformulating the problem into a standard distributed form, generic optimization algorithms can be applied across various power system control tasks. Among these, the \acrfull{admm} has been widely adopted. Early implementations were limited to small-scale cases~\cite{6748974}, but subsequent innovations in power-flow-based partitioning~\cite{Guo2017} and adaptive acceleration schemes~\cite{mhanna2018adaptive} have enabled \acrshort{admm} to handle systems exceeding 10,000 buses. Recent two-level frameworks~\cite{sun2021two,sun2023two} even provide global convergence guarantees for cases as large as 30,000 buses. However, as a first-order method, \acrshort{admm} typically requires thousands of iterations to achieve modest accuracy. This high iteration count necessitates an expensive communication infrastructure with high bandwidth and low latency to manage the intensive coordination between agents.

To accelerate convergence, \acrfull{aladin} was introduced in \cite{Boris2016aladin}, merging the strengths of \acrshort{admm} and \acrfull{sqp}. This approach achieves global convergence and local quadratic convergence rates for nonconvex problems when the Hessian is chosen appropriately. When applied to AC \acrshort{opf} in \cite{engelmann2017distributed,Engelmann2019}, it reached high accuracy for the IEEE 300-bus case in only 26 iterations. Despite its promise, ALADIN faces significant synchronization delays in geographically distributed environments due to the transmission of full KKT matrices~\cite{dai2023easimov}. While null-space and Schur-complement techniques have been proposed to reduce communication overhead~\cite{engelmann2020decomposition}, scalability remains a challenge. In particular, although the method scales well when inequality constraints are absent~\cite{Boris2016aladin,muhlpfordt2021distributed,dai2022rapid}, its reliance on active-set methods can hinder scalability because of the combinatorial complexity these methods entail~\cite[Ch.~15.2]{nocedal2006numerical}. This limitation of standard ALADIN has also been highlighted in recent studies~\cite{dai2023itd,lanza2024ADMM_ALADIN,Dai2025thesis}.

Despite the theoretical advantages, critical issues remain unsolved before the full potential of distributed optimization can be realized. This paper investigates generic distributed optimization for real-world, large-scale nonconvex problems, specifically targeting the challenges identified by~\cite{patari2021distributed}:
\begin{itemize}[leftmargin=15pt]
\renewcommand{\labelitemi}{$\blacksquare$}

\item \textbf{Convergence Speed}: Many current methods require thousands of iterations to reach acceptable accuracy, necessitating faster algorithms capable of real-time grid adaptability.

\item \textbf{Convergence Guarantees}:  Rigorous theoretical frameworks are essential to ensure that distributed solutions consistently converge within practical timeframes.

\item \textbf{Scalability}: Algorithms must demonstrate robust performance on massive power system benchmarks under diverse and realistic operational conditions.

\item \textbf{Communication Efficiency}: Data exchange must be minimized to remain compatible with existing communication technologies and infrastructure.

\end{itemize}

The contributions of the paper are three-fold:
\begin{enumerate}[leftmargin=15pt]
\item We propose a two-level distributed algorithm for generic \acrshort{nlp}s with convergence guarantees and fast local convergence speed. The upper level uses barrier methods for inequality constraints, while the lower level applies \acrshort{aladin} to smoothed equality-constrained subproblems, which avoids combinatorial challenges~\cite[Ch.~15.2]{nocedal2006numerical}. Integration of the Schur complement for derivative condensation reduces computational \& communication overhead and mitigates data privacy risks from sparsity patterns in Jacobian and Hessian matrices.

\item We demonstrate enhanced scalability through extensive large-scale simulations with different operational scenarios. Using the \acrfull{spmd} paradigm with distributed memory, the approach outperforms the centralized solver \ipopt~\cite{wachter2006ipopt} on modest hardware, accounting for synchronization overhead.  
\ipopt~\cite{wachter2006ipopt} on modest hardware.

\item We systematically analyze how network decomposition impacts convergence, communication overhead, and practical performance of the proposed distributed algorithm. Additionally, we outline criteria for optimal decomposition of the network and efficient distributed \acrfull{ad} enabled by the proposed distributed algorithm.

\end{enumerate}

The rest of this paper is organized as follows: Section~\ref{sec::problem} describes the AC ~\acrshort{opf} problem and its distributed formulation. Section~\ref{sec::algorithm} introduces the proposed algorithm with convergence analysis. Section~\ref{sec::distIP::implementation} provides practical considerations, including network decomposition, distributed \acrshort{ad}, and resilience to single-point failures. Section~\ref{sec::distIP::test} discusses the large-scale simulation results in detail, while Section~\ref{sec::communication} examines communication efforts under both theoretical and practical scenarios. Finally, Section~\ref{sec::conclusion} concludes the paper.

\section{Problem Formulation}
\label{sec::problem}
This section introduces the standard AC \acrfull{opf} formulation. We then explore graph-based decomposition methods and demonstrate how these can be reformulated into a generic distributed approach with affine consensus constraints.
\subsection{Preliminaries}
Let us consider a power system $\mathcal{S}=(\mathcal{N},\;\mathcal{L},\;\mathcal{R})$, where $\mathcal{N}$ represents the set of buses, $\mathcal{L}$ the set of lines and $\mathcal{R}$ the set of regions into which the network is partitioned. Let $n^\mathcal{N}$, $n^\mathcal{L}$, and $n^\mathcal{R}$ denote the number of buses, lines, and regions, respectively. In this context, the complex voltage at a bus can be expressed in rectangular coordinates, i.e.,
$V_i = u_i+ \textbf{j} w_i,$ 
where $u_i$ and $w_i$ are the real and imaginary parts of the complex voltage $V_i$, respectively, and $\textbf{j}= \sqrt{-1}$ represents the imaginary. The variables $p^g_i$, $q^g_i$ (resp. $p^l_i$, $q^l_i$) represent the real and reactive power produced by generators (resp. loads) at bus $i$. These variables are set to 0 if no generator (resp. load) is connected to a bus $i$. The real and imaginary parts of the complex nodal admittance matrix $Y$ are represented by $G$ and $B$. The optimization state vector $x$ includes all the real and imaginary parts of complex voltage and active and reactive generator injections, i.e., $x = (u,w,p^g,q^g)\in \mathbb{R}^{4 n^\mathcal{N}}$. 

%Throughout this paper, we stack all steady states at region $\ell\in\mathcal R$ by $\chi_\ell=\{(\theta_i,v_i,p_i,q_i)\}_{i\in\mathcal N_\ell}$ with $\mathcal N_\ell$ bus set at region $\ell$. When a vector $\xi$ consists of $ n^\mathcal{R}$ subvectors, we write $\xi =(\xi_1,...,\xi_{ n^\mathcal{R}})$.
\subsection{Conventional AC Optimal Power Flow}
The AC \acrfull{opf} problem with the complex voltage in rectangular coordinates can be written as follows
\begin{subequations}\label{eq::opf::rec}
\begin{flalign}
\min_{x} & \; \sum_{i \in \mathcal{N}}\left\{a_{i,2} \left(p^{g}_i\right)^2+a_{i,1}\;p^{g}_{i} + a_{i,0}\right\}\\
\textrm{s.t.}& \;  p_i^\text{inj}(w,u) = p^g_i-p^l_i,& \forall i\in\mathcal{N}\label{eq::bim::balance::p}\\
& \; q_i^\text{inj}(w,u) = q^g_i-q^l_i ,&\forall i\in\mathcal{N}\label{eq::bim::balance::q}\\
& \; p^2_{ij}+q^2_{ij}\leq \overline{s}^2_{ij},&\forall(i,j)\in\mathcal{L}\label{eq::bim::limit::line}\\
& \;\change{\tan \underline{\theta}_{ij}\leq \frac{u_j w_i-u_i w_j}{u_i u_j+w_i w_j}\leq\tan\overline{\theta}_{ij},}
&\forall(i,j)\in\mathcal{L}\label{eq::bim::limit::angle}\\
& \;\underline{v}_i^2\leq u_i^2+w_i^2\leq \overline{v}^2_i,&\forall i\in\mathcal{N}\label{eq::bim::limit::voltage}\\
& \; \underline{p}_i^g\leq p_i^g\leq \overline{p}_i^g,&\forall i\in\mathcal{N}\label{eq::bim::limit::pg}\\
& \; \underline{q}_i^g\leq q_i^g\leq \overline{q}_i^g,&\forall i\in\mathcal{N}\label{eq::bim::limit::qg}
\end{flalign}
\end{subequations}
with power injection at bus $i\in\mathcal{N}$,
\begin{subequations}
\begin{align}
p_{i}^\text{inj}(u,w)=&\sum_{k\in\mathcal{N}}{G_{ik} (u_i u_k + w_i w_k)+ B_{ik}(w_i u_k - u_i w_k)}, \notag\\
q_{i}^\text{inj}(u,w)=&\sum_{k\in\mathcal{N}}{ G_{ik} (w_i u_k - u_i w_k)- B_{ik}(u_i u_k + w_i w_k)},\notag
\end{align}
\end{subequations}
and power flow in line $(i,j)\in\mathcal{L}$,
\begin{subequations}
\begin{align}
%p_{ij}=&G_{ij}\{u_i(u_i-u_j)+w_i(w_i-w_j)\} + B_{ij}(u_i w_j-w_i u_j), \notag\\
p_{ij}=&G_{ij} (u_i^2-u_i u_j+w_i^2-w_i w_j) + B_{ij}(u_i w_j-w_i u_j), \notag\\
q_{ij}=&B_{ij}\{u_i(u_j-u_i)+w_i(w_j-w_i)\} + G_{ij}(u_i w_j - w_i u_j),\notag
\end{align}
\end{subequations}
where $a_{i,2}$, $a_{i,1}$, and $a_{i,0}$ are the polynomial coefficients for the operation cost of power generations at bus $i$. The symbols $\underline{\cdot}$ and $\overline{\cdot}$ denote upper and lower bounds for the respective state variables. Thus, problem~\eqref{eq::opf::rec} includes the active and reactive nodal power balances~\eqref{eq::bim::balance::p}-\eqref{eq::bim::balance::q}, apparent power limit \change{and angle difference limit} on transmission lines~\eqref{eq::bim::limit::line}- \eqref{eq::bim::limit::angle} and the bounds on the voltage magnitudes and power generations~\eqref{eq::bim::limit::voltage}-\eqref{eq::bim::limit::qg}.

\begin{remark}[\textbf{Decomposition of OPF~\cite{shin2021graph}}]
AC \acrshort{opf} problems~\eqref{eq::opf::rec} are well-suited for distributed approaches and parallel computing due to their repetitive nature across different components e.g. buses, lines, generators. 
%This characteristic allows for easy decoupling of the objective and constraints and transformation into a standard distributed form.
\end{remark}
\begin{figure}[htbp!]
\centering
\begin{minipage}[b]{0.21\textwidth}
\vspace{0pt}
\begin{subfigure}{\linewidth}
\centering
\includegraphics[width=0.8\linewidth]{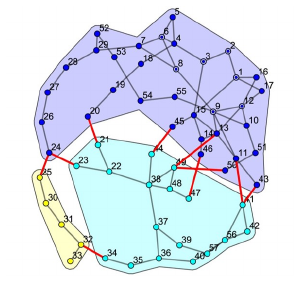}\vspace{-5pt}
\caption{Graph decomposition~\cite{murray2020grid}}
\label{fig::graph::decomposition}
\end{subfigure}

\medskip

\begin{subfigure}{\linewidth}
\centering
\includegraphics[width=0.9\linewidth]{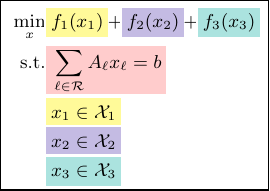}\vspace{-2pt}
\caption{Graph-based formulation}
\label{fig::graph::reformulation}
\end{subfigure}
\end{minipage}\hfill
\begin{subfigure}[b]{0.275\textwidth}
\vspace{0pt}
\centering
\includegraphics[width=\linewidth]{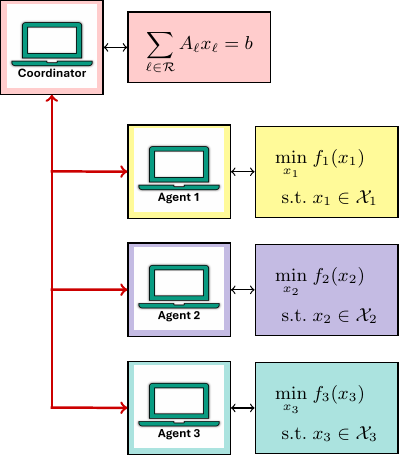}\vspace{-3pt}
\caption{Distributed optimization framework}
\label{fig::distOPT}
\end{subfigure}
\caption{Graph-based distributed optimization in power systems}
\label{fig::graphOPT}
\end{figure}

\subsection{Graph-Based Reformulation for Distributed Optimization}\label{sec::opf::distributed}
Regarding the distributed problem formulation, we share components with neighboring regions to ensure physical consistency, following~\cite{muhlpfordt2021distributed}. For a given region $\ell\in\mathcal{R}$, buses entirely within the region form the core bus set  $\mathcal{N}_\ell^\textrm{core}$, while those shared by neighboring regions are copy buses in $\mathcal{N}_\ell^\textrm{copy}$. Hence, the full bus set is in region $\ell$ is given by $\mathcal{N}_\ell = \mathcal{N}_\ell^{\textrm{core}} \cup \mathcal{N}_\ell^{\textrm{copy}}$. Additionally, $\mathcal{L}_\ell$ represents the set of all transmission lines within region $\ell$. This spatial decomposition, visualized in Fig.~\ref{fig::graphOPT} using a partitioned IEEE 57-bus system,%~\cite{murray2020grid}
creates local subproblems while introducing shared variables at regional boundaries that require coordination.

The distributed reformulation process, illustrated in Fig.~\ref{fig::graph::reformulation}, can transform a conventional AC OPF problem~\eqref{eq::opf::rec} into a distributed framework where: 1) each region $\ell$ solves a local subproblem with its objective $f_\ell$ and feasible set $\mathcal{X}_\ell$, and 2) a consensus mechanism enforces physical consistency across regions.
This yields the distributed formulation:
%As illustrated in Fig.~\ref{fig::graphOPT}, this decomposition allows us to reformulate a generic optimization problem into a standard affinely coupled distributed form, using an IEEE 57-bus power network with 3 regions~\cite{murray2020grid} as an example. The resulting structure is ready for implementation in a distributed optimization framework, where local agents are responsible for finding local solutions with its own objective $f_\ell$ and feasible set $\mathcal{X}_\ell$ within their respective regions. These local solutions are coordinated through consensus constraints
%$$\sum_{\ell\in\mathcal{R}} A_\ell x\ell =b,$$
%which enforce agreement on shared variables like voltage angles and power flows between neighboring regions. 
\begin{subequations}
\label{eq::problem::original}
\begin{align}
\min_{x}\quad&f(x):=\sum_{\ell\in\mathcal{R}} f_\ell(x_\ell)\\\label{eq:affine1}
\textrm{s.t.}  \quad  &\sum_{\ell\in\mathcal{R}} A_\ell x_\ell =b\\\label{ineq::nonlinear}%\quad\;\;\mid\lambda\\
&c^\textrm{E}_\ell(x_\ell) = 0\qquad\quad\mid\ell\in\mathcal{R}\\\label{eq::nonlinear}
&c^\textrm{I}_\ell(x_\ell) \leq 0\qquad\quad\mid\ell\in\mathcal{R}
% &\underline x_\ell\leq x_\ell \leq \overline x_\ell\qquad\mid\gamma_\ell,\,\;\ell\in\mathcal{R}
\end{align}
\end{subequations}
where local state $x_\ell$ includes \change{the real and imaginary part of the complex voltage $u_i, w_i$} for all buses $i\in\mathcal{N}_\ell$, and the generator injections $p^g_i, q^g_i$ for all core buses $i\in\mathcal{N}^\textrm{core}_\ell$. In a specific region $\ell\in\mathcal{R}$, $f_\ell$ represents the local cost function concerning core generators in the region $\ell$. Function $c^\text{E}_\ell$ encompasses the nodal power balance~\eqref{eq::bim::balance::p}-\eqref{eq::bim::balance::q} for all core buses $i\in\mathcal{N}^\textrm{core}_\ell$, and $c^\text{I}_\ell$ collects system limits on voltage magnitude~\eqref{eq::bim::limit::voltage}, active and reactive power generation~\eqref{eq::bim::limit::pg}-\eqref{eq::bim::limit::qg} for all core bus $i\in\mathcal{N}^\textrm{core}_\ell$, as well as power limit~\eqref{eq::bim::limit::line} on the transmission lines $(i,j)\in\mathcal{L}_\ell$ within region $\ell$. The consensus constraint~\eqref{eq:affine1} ensures consistency of core and copy variables between neighboring regions. 

As shown in Fig.~\ref{fig::distOPT}, this architecture enables parallel computation through regional agents that solve local subproblems, coordinated by a central mechanism enforcing consensus. This approach preserves physical consistency while improving scalability, as regional solutions only require coordination at shared boundaries rather than full system visibility.

\section{Distributed Nonconvex Optimization}
\label{sec::algorithm}
%\subsection{Precursor and its Scalability Issue}
This section develops our distributed method for large-scale, smooth, nonlinear, nonconvex problems with local constraints and affine coupling. We first summarize the limits of standard ALADIN and then motivate the design choices in Algorithm~\ref{alg::distIP}.
\subsection{Limitations of Standard ALADIN}
\acrfull{aladin}, first proposed in~\cite{Boris2016aladin}, was developed for generic noncovex \acrfull{nlp} with guarantees for global convergence. Unlike the first-order \acrshort{admm}, \acrshort{aladin} integrates a \acrfull{sqp} framework. This involves using the active-set method and solving the equality-constrained \acrfull{qp} problems in the coordinator. This enables quadratic convergence rates and strong scalability for problems without inequality constraints~\cite{Boris2016aladin,dai2022rapid}.

A key limitation arises when inequality constraints are present. Similar to traditional \acrshort{sqp} approaches, the active-set method introduces combinatorial challenges~\cite[Ch.~15.2]{nocedal2006numerical}, as the number of possible active-sets increases exponentially with the number of inequalities. This scalability challenge of the standard \acrshort{aladin} has been acknowledged in recent studies~\cite{dai2023itd,lanza2024ADMM_ALADIN,Dai2025thesis}. It restricts prior studies on distributed AC \acrshort{opf} using \acrshort{aladin} to systems with fewer than 1000 buses or a merged network with loosely inter-regional coupling, making the problem relatively easier to solve.

Another limitation is communication overhead. The standard ALADIN requires derivative information from each subproblem to construct a second-order approximated model of the original problem~\eqref {eq::problem::original} in the coordinator. While this accelerates convergence compared with first-order algorithms like \acrshort{admm}, it significantly increases computational and communication demands. As system size grows, evaluating the second-order derivatives and coordinating communication between local agents and a central coordinator become significant bottlenecks. Previous research has focused mainly on the numerical performance of the \acrshort{aladin} algorithm, with limited exploration of distributed implementations. This leaves the impact of communication overhead on practical performance unclear. Additionally, optimizing over networks, such as power systems, may reveal the network topology of the power networks through the sparsity pattern of local derivative information, raising privacy concerns.

%To address these limitations, we propose a two-level variant. Inequalities are handled by a barrier outer loop, so each inner solve is a smooth equality-constrained NLP. We then apply Schur complement for derivative condensing to communicate only condensed curvature and residual quantities, and we use distributed inertia correction to keep the coordination step well posed in nonconvex or near-degenerate regimes. %Algorithm~\ref{alg::distIP} summarizes the resulting method; the remainder of this section details its components, convergence properties, and partitioning criteria.

\begin{algorithm}[htbp!]
\caption{Barrier ALADIN for solving~\eqref{eq::DistProblem::barrier}}
\label{alg::distIP}
\small
\textbf{Input:} 
\begin{itemize}
\item initial primal and dual points $(z,\;\lambda)$,
\item positive penalty parameters\;$\rho$,\;$\mu$,
\item scaling symmetric matrices $\Sigma_\ell\succ 0$
\end{itemize} 
\textbf{Repeat:}
\begin{algorithmic}[1]
\State Solve decoupled \acrshort{nlp}s for all $\ell\in\mathcal{R}$ \Comment{\textit{\scriptsize Parallel}}\label{alg::distIP::decoupled}
\begin{subequations}\label{eq::problem::decoupled::barrier}
\begin{align}
\min_{x_\ell,s_\ell}\quad& f^\mu_\ell(x_\ell,s_\ell) + \lambda^\top A_\ell x_\ell + \frac{\rho}{2} \norm{x_\ell-z_\ell}_{\Sigma_\ell}^2\label{eq::problem::decoupled::barrier::obj}\\
\textrm{s.t.}  \quad  &c^\textrm{E}_\ell(x_\ell) = 0\qquad\quad\mid\gamma_\ell,\\
&c^\textrm{I}_\ell(x_\ell) + s_\ell =  0\,\,\quad\mid\kappa_\ell,
% &\underline x_\ell\leq x_\ell \leq \overline x_\ell\qquad\mid\gamma_\ell,\,\;\ell\in\mathcal{R}
\end{align}
\end{subequations}
with $\kappa_\ell, s_\ell\geq 0$, and use initial guess for $\kappa_\ell, s_\ell$  from~\eqref{alg::distIP::line} if avaliable. \vspace{3pt}
\State Evaluate $E^\mu_\ell$ and $E^0_\ell$ from~\eqref{eq::optimality::local}, and condense derivatives for all $\ell\in\mathcal{R}$\Comment{\textit{\scriptsize Parallel}}\label{alg::distIP::condense}
\begin{subequations}\label{eq::condensing}
\begin{align}
W_\ell =& \; - \bar{A}_\ell \; \bar{H}_\ell^{-1} \bar{A}^\top_\ell\label{eq::condensing::dualHessian}\\
h_\ell=& \; A_\ell x_\ell - \bar{A}_\ell \; \bar{H}_\ell^{-1} \bar{g}_\ell
\end{align}
\end{subequations}
where $\bar{H}_\ell$, $\bar{g}_\ell$ and $\bar{A}_\ell$ are local curvature information at decoupled solution $(x_\ell,\,s_\ell)$; more detailed description see~\eqref{eq::dist::kkt::pd::system}-\eqref{eq::local::curverture}.\vspace{3pt}
\State Send $E^\mu_\ell$, $E^0_\ell$, $A_\ell x_\ell$, $W_\ell$ and $h_\ell$ to coordinator, and 
run Algorithm~\ref{alg::distributed::correction} to correct \text{inertia} if the \text{inertia} condition~\eqref{eq::dist::inertia} is not satisfied\vspace{3pt}\Comment{\textit{\scriptsize Synchronize}}\vspace{3pt}
\State Terminate if condition~\eqref{eq::optimality::original} is satisfied. \vspace{3pt} \Comment{\textit{\scriptsize Centralized}}
\State Update barrier parameter by~\eqref{eq::barrier::update} if condition~\eqref{eq::optimality::barrier} is satisfied. \vspace{3pt}\label{alg::distIP::mu} \Comment{\textit{\scriptsize Centralized}}
%\Comment{\textit{\scriptsize Hybrid}}%\vspace{3pt}\lab
\State Solve coordination problem\Comment{\textit{\scriptsize Centralized}}\label{alg::distIP::coordination}
\begin{equation}\label{eq::coordination}
W \Delta \lambda  = - h
\end{equation}
with $W = \sum_{\ell\in\mathcal{R}} W_\ell$ %and $h = - b + \sum_{\ell\in\mathcal{R}} h_\ell$.%\vspace{3pt}
\State Send $\Delta \lambda$ back to each local agent \Comment{\textit{\scriptsize Synchronize}}\vspace{3pt}
%\State Check termination condition, adaptive barrier criterion
\State Recover local primal-dual step for all $\ell\in\mathcal{R}$\Comment{\textit{\scriptsize Parallel}}
\begin{subequations}\label{eq::recover}
\begin{align}
\begin{pmatrix}
\Delta x_\ell\\\Delta \gamma_\ell
\end{pmatrix} &= -\; \bar{H}_\ell^{-1} \bar{A}_\ell^\top \Delta \lambda  - \bar{g}_\ell,\label{eq::recovery::dual}\\
\Delta s_\ell &= -\; c^\textrm{I}_\ell(x_\ell) - s_\ell - R_\ell \; \Delta x_\ell,\\
\Delta \kappa_\ell &= -\; \kappa_\ell + \mathcal{S}_\ell^{-1} (\mu\; \mathds{1} - \mathcal{K}_\ell \Delta s_\ell),
\end{align}
\end{subequations}
and obtain local primal-dual steplength $(\beta^\text{p}_\ell,\,\beta^\text{d}_\ell)$ by using the fraction-to-boundary method
\begin{subequations}\label{eq::fraction2boundary}
\begin{align}
\beta^\textrm{p}_\ell =&  \max \{\beta\in(0,1]:\; s_\ell+\beta \Delta s \geq (1-\tau) s_\ell\}\\
\beta^\textrm{d}_\ell =& \max \{\beta\in(0,1]:\; \kappa_\ell+\beta \Delta \kappa \geq (1-\tau) \kappa_\ell\}.
\end{align}     
\end{subequations}%\vspace{3pt}
\State Update primal-dual variables by \Comment{\textit{\scriptsize Synchronize}} 
\begin{subequations}\label{alg::distIP::line}
\begin{align}
z^{+}_\ell &= z_\ell +\alpha_1  (x_\ell-z_\ell) + \alpha_2 \, \beta^\textrm{p}\Delta x_\ell, \quad&\forall\ell\in\mathcal{R}&\\
s^+_\ell&=s_\ell+  \alpha_2 \,\beta^\textrm{p} \Delta s_\ell, \quad&\forall\ell\in\mathcal{R}&\\
\kappa^+_\ell&=\kappa_\ell+  \alpha_2 \,\beta^\textrm{p} \Delta \kappa_\ell, \quad&\forall\ell\in\mathcal{R}&\\
\lambda^{+} &= \lambda +  \alpha_3 \, \beta^\textrm{d}\Delta\lambda,
%\label{eq::dualSol}
\end{align}
with
\begin{align}\label{step::distIP::coordinate}
\beta^\textrm{p} = \min_{\ell\in\mathcal{R}} \beta^\textrm{p}_\ell,
\end{align}
\end{subequations}    
where, $\alpha_1$, $\alpha_2$, $\alpha_3$ are determined by ~\cite[Alg.~3]{Boris2016aladin}. Alternatively, for full-step updates without globalization, set $$\alpha_1=\alpha_2=\alpha_3=1.$$   
% $$z^+_\ell = x_\ell + \beta^\textrm{p} \Delta x_\ell\;\text{and}\; \lambda^+ = \lambda + \beta^\textrm{d} \Delta \lambda$$
\end{algorithmic}

\end{algorithm}

\subsection{Algorithm Framework}

To overcome these limitations, we solve Problem~\eqref{eq::problem::original} with a two-level framework. 
At the outer level, we replace active-set handling of inequalities with a log-barrier formulation, so each outer iteration solves a smooth equality-constrained subproblem. 
At the inner level, we apply an ALADIN-type coordination scheme, but reduce coordination overhead by Schur complement, communicating only condensed curvature and residual quantities.

By using the barrier to smooth the inequalities, the original problem~\eqref{eq::problem::original} is transformed into a series of equality-constrained barrier problems:
\begin{subequations}\label{eq::DistProblem::barrier}
\begin{align}
%\min_{x,s}\quad& f^\mu(x,s,\mu) = f(x) - \mu\sum_{m \in\mathcal{C}^\textrm{I}}\textrm{ln} (s^{(m)})\label{eq::Dist::barrier::obj}\\\label{eq::barrier::consensus}
\min_{x,s}\quad& f^\mu(x,s) = \sum_{\ell\in\mathcal{R}} f^\mu_\ell(x_\ell,s_\ell)\label{eq::Dist::barrier::obj}\\\label{eq::barrier::consensus}
\textrm{s.t.}  \quad  &\sum_{\ell\in\mathcal{R}} A_\ell x_\ell =b\quad\;\;\mid\lambda\\\label{ineq::nonlinear::barrier}
&c^\textrm{E}_\ell(x_\ell) = 0\qquad\quad\mid\gamma_\ell,\;\ell\in\mathcal{R}\\
&c^\textrm{I}_\ell(x_\ell) + s_\ell =  0\,\,\quad\mid\kappa_\ell,\;\ell\in\mathcal{R}\label{eq::barrier::end}
% &\underline x_\ell\leq x_\ell \leq \overline x_\ell\qquad\mid\gamma_\ell,\,\;\ell\in\mathcal{R}
\end{align}
\end{subequations}
where the local barrier objective is:
$$f^\mu_\ell (x_\ell,s_\ell)= f_\ell(x_\ell) - \mu\sum_{m \in\mathcal{C}_\ell^\textrm{I}}\textrm{ln} (s_\ell^{(m)}),$$
and $s_\ell\geq0,\; \forall\ell\in\mathcal{R}$ is implicitly enforced because minimization with the barrier term prevents $s_\ell$ from approaching zero~\cite{nocedal2006numerical}. The original local Lagrangian functions are written:
$$
\mathcal{L}_\ell(x_\ell,\,\kappa_\ell,\,\gamma_\ell) = f_\ell(x_\ell) + \kappa^\top_\ell \,c^\mathrm{I}_\ell(x_\ell)+\gamma_\ell^\top\, c^\mathrm{E}(x_\ell),\;\forall\ell\in\mathcal{R}.
$$

The corresponding \acrshort{kkt} conditions of the barrier problems~\eqref{eq::DistProblem::barrier}, the so-called \textit{perturbed \acrshort{kkt} conditions}, can be written as
\begin{subequations}\label{eq::dist::perturbedKKT}
\begin{align}
%\nabla_x L_\ell^{\mu} =\nabla_x L_\ell = 
%\nabla f_\ell(x_\ell) + R_\ell^\top \kappa_\ell + J_\ell^\top \gamma_\ell + A_\ell^\top \lambda =&\;0,\quad\forall \ell\in\mathcal{R}\label{eq::dist::perturbedKKT::1}\\
\nabla_x \mathcal{L}(x_\ell,\kappa_\ell,\gamma_\ell) + A_\ell^\top \lambda =&\;0,\quad\forall \ell\in\mathcal{R}\label{eq::dist::perturbedKKT::1}\\
-\mu \mathds{1}+\mathcal{S}_\ell\;\kappa_\ell=&\;0,\quad\forall \ell\in\mathcal{R}\label{eq::dist::perturbedKKT::slack}\\
c_\ell^\textrm{E}(x_\ell)=&\;0,\quad\forall \ell\in\mathcal{R}\label{eq::dist::perturbedKKT::2}\\
c_\ell^\textrm{I}(x_\ell)+s_\ell=&\;0,\quad\forall \ell\in\mathcal{R}\label{eq::dist::perturbedKKT::3}\\
\sum_{\ell\in\mathcal{R}} A_\ell \; x_\ell - b =&\;0
\end{align}
\end{subequations}
with
$$\nabla_x \mathcal{L}(x_\ell,\kappa_\ell,\gamma_\ell) = \nabla f_\ell(x_\ell) + R_\ell^\top \kappa_\ell + J_\ell^\top \gamma_\ell,$$ 
where $R_\ell = \nabla c^\text{I}(x_\ell)^\top,\;J_\ell = \nabla c^\text{E}(x_\ell)^\top, \;\mathcal{S}_\ell = \textrm{diag}(s_\ell)$, and $\mathds{1}$ denotes the vector of all ones with the respective dimension. %Here, instead of 
%$$ \mu \mathcal{S}_\ell^{-1} \mathds{1} - \kappa_\ell =0,\quad\forall \ell\in\mathcal{R},$$
%the condition~\eqref{eq::dist::perturbedKKT::slack} is used instead for numerical stability and to enhance algorithm robustness. For more details on primal vs. primal-dual systems, refer to the discussion in~\cite[Ch.19.3]{nocedal2006numerical}.
Note that the conditions~\eqref{eq::dist::perturbedKKT} for $\mu=0$, together with $\kappa_\ell,\;s_\ell\geq0,\;\forall\ell\in\mathcal{R}$, are equivalent to the \acrshort{kkt} conditions for the original problem~\eqref{eq::problem::original}~\cite{wachter2006ipopt}.
\begin{remark} 
The solution to the problem~\eqref{eq::DistProblem::barrier} is given by
\begin{equation}
x^\star(\mu) = \arg\min_{x}~\eqref{eq::Dist::barrier::obj},\;\textrm{s.t.}\;\eqref{eq::barrier::consensus}-\eqref{eq::barrier::end}.\notag
\end{equation}
As the barrier parameter $\mu$ approaches zero, the solution $x^\star(\mu)$ converges to the solution of the original problem~\eqref{eq::problem::original}.
\end{remark}
Then, in the lower level, \acrshort{aladin} is used to solve a smoothed problem~\eqref{eq::DistProblem::barrier} for a fixed value of 
barrier parameter $\mu$ in a distributed manner. Once sufficient accuracy is achieved, the barrier parameter is decreased, and the process continues until the algorithm converges.

%\begin{proposition} the solution to the problem~\eqref{eq::DistProblem::barrier}:
%\begin{equation}
%    x^\star(\mu) = \arg\min_{x}~\eqref{eq::Dist::barrier::obj},\;\textrm{s.t.}\;\eqref{eq::barrier::consensus}-\eqref{eq::barrier::end}\notag
%\end{equation}
%$x^\star(\mu)|_{\mu\rightarrow0}$ is the solution to the original problem~\eqref{eq::problem::original}.
%\end{proposition}
%More specifically, augmented Lagrangian of barrier problems are solved by each local agent. Then, the coordinator receives condensed curvature information from local agents and updates dual variables by using the second-order multipliers method, i.e., solve perturbed KKT in dual space. In the present paper, we write down the Lagrangian multipliers right after the corresponding constraints, e.g. $\kappa_\ell$, $\gamma_\ell$and $\lambda$ in the problem~\eqref{eq::DistProblem::barrier}.

\subsubsection{Decoupled Step}
The proposed distributed algorithm is outlined in Algorithm~\ref{alg::distIP} in detail. Firstly, decoupled barrier \acrshort{nlp} subproblems~\eqref{eq::problem::decoupled::barrier} are solved locally for each region $\ell\in\mathcal{R}$. After solving these local subproblems, the derivatives are evaluated and condensed locally at the local solution $(x_\ell,\;s_\ell)$ using the Schur complement. These derivatives are later utilized in the next stages of the distributed optimization process to facilitate communication and coordination between regions. It is important to note that both Step~\ref{alg::distIP::decoupled} and Step~\ref{alg::distIP::condense} can be executed in parallel, enhancing the efficiency of the overall process.

\begin{remark}\label{rmk::f2b}
When solving the decoupled \acrshort{nlp} subproblems, the fraction-to-boundary method~\cite[Ch.~19.2]{nocedal2006numerical} is essential to ensure that the dual variables $\kappa_\ell$ and the slack variables $s_\ell$ remain positive throughout the iteration. This is a critical requirement for the proper functioning of barrier-based approaches.
\end{remark}

\subsubsection{Assessing Optimality Conditions}
To assess the convergence of the inner \acrshort{aladin} algorithm, we need to evaluate the perturbed \acrshort{kkt} conditions~\eqref{eq::dist::perturbedKKT}. After solving the decoupled barrier \acrshort{nlp} subproblems~\eqref{eq::problem::decoupled::barrier}, we first evaluate the decoupled optimality residual~\eqref{eq::dist::perturbedKKT::1}-\eqref{eq::dist::perturbedKKT::3} locally, defined by:
\begin{align}\label{eq::optimality::local}
E^\mu_{\ell}(x_\ell,s_\ell,\lambda&,\kappa_\ell,\gamma_\ell) =\max \left\{\frac{\norm{\nabla_{x} \mathcal{L}_\ell(x_\ell,\lambda,\kappa_\ell,\gamma_\ell)}_\infty}{s^d_\ell},\right.\notag\\&\left.\frac{\norm{S_\ell\;\kappa_\ell-\mu \mathds{1}}_\infty}{s^c_\ell},\norm{ \begin{bmatrix}
c^\text{E}_\ell(x_\ell)\\c^{I}_\ell(x_\ell)+s_\ell
\end{bmatrix}}_\infty\right\}
\end{align}with scaling parameters $s_\ell^d,\;s^c_\ell\geq1$.

Each region $\ell\in\mathcal{R}$ sends its local residual  $E^\mu_{\ell}$ and the coupling term $A_\ell\ x_\ell$ to the coordinator, which computes the global optimality residual as:
\begin{align}\label{eq::residual::global}
E^\mu(x,s,\lambda,\kappa,\gamma) =\max\left\{ \max_{\ell\in\mathcal{R}} \left\{E^\mu_{\ell}(x,s,\lambda,\kappa,\gamma)\right\},\right.\notag\\\left. \norm{\sum_{\ell\in\mathcal{R}} A_\ell x_\ell-b}_\infty\;\right\}.
\end{align}

For the original problem~\eqref{eq::problem::original}, setting $\mu=0$ gives the residual $E^0$. When the current primal-dual iterate $(x,s,\lambda,\kappa,\gamma)$ satisfy:
\begin{equation}\label{eq::optimality::original}
E^0(x,s,\lambda,\kappa,\gamma)\leq \epsilon,
\end{equation}
where $\epsilon$ is a user-defined tolerance, and the slack-dual variables $(s_\ell,\kappa_\ell)$ remain positive at the decoupled solutions (see Remark~\ref{rmk::f2b}), the solution also satisfies the \acrshort{kkt} conditions of the original problem~\eqref{eq::problem::original}. Therefore, the algorithm terminates when condition~\eqref{eq::optimality::original} is met.

Let $\nu$ denote the outer barrier iteration counter and $\mu_\nu$ the current barrier parameter.
At each outer iteration, we solve the barrier subproblem~\eqref{eq::DistProblem::barrier} only to a
$\mu_\nu$-scaled accuracy:
\begin{equation}\label{eq::optimality::barrier}
E^{\mu_\nu}(x,s,\lambda,\kappa,\gamma)\leq \eta^-\ \mu_\nu,
\end{equation}
for a constant $\eta^->0$. In our implementation, the tolerance parameter is set as $\eta^-=10$. Once \eqref{eq::optimality::barrier} is satisfied, we
reduce the barrier parameter according to
\begin{equation}\label{eq::barrier::update}
\mu_{\nu+1} = \max\left\{\frac{\epsilon}{10},\min\left\{\frac{\mu_\nu}{5},\mu_\nu^{1.5}\right\}\right\}
\end{equation}
%This approach ensures that the barrier parameter decreases at a superlinear rate, thereby accelerating convergence.
This update decreases $\mu_\nu$ until it reaches $\mu_\mathrm{min}$, or~\eqref{eq::optimality::original} triggers termination.
\begin{remark}\label{rmk:BLN_strategy2}
To promote fast local convergence, we use the barrier update strategy in~\cite[Strategy~2]{byrd1997local}\cite{wachter2006ipopt}, which is proven to give rise to superlinear convergence under standard second-order sufficient conditions.
\end{remark}

\subsubsection{Condensed Coordination Step}

The coordination problem~\eqref{eq::coordination} is one iteration of the second-order multiplier method~\cite{bertsekas1997nonlinear}. Consider applying a Newton step to the primal-dual perturbed \acrshort{kkt} conditions~\eqref{eq::dist::perturbedKKT}:
%In the coordination step of Algorithm~\ref{alg::distIP} (Step~\ref{alg::distIP::coordination}), a coordination problem are solved. This is equivalent to applying a Newton-step to  as
\hspace{-10pt}\begin{equation}\label{eq::dist::kkt::pd::system}
\begin{bmatrix}
\nabla_{xx}^2 \mathcal{L} & 0 & J^\top & R^\top& A^\top\\
0 & \mathcal{K} & 0 & \mathcal{S} &0 \\
J & 0& 0& 0&0\\
R & I& 0& 0&0\\
A &0 &0 &0 &0
\end{bmatrix}\hspace{-5pt}
\begin{bmatrix}
\Delta x\\
\Delta s\\
\Delta \gamma\\
\Delta \kappa \\
\Delta \lambda
\end{bmatrix}\hspace{-2pt}= \hspace{-2pt} -\hspace{-2pt}\begin{bmatrix}
g\\ \mathcal{K}\; s-\mu \mathds{1} \\c^\text{E}(x)\\c^\text{I}(x)+s\\  Ax-b
\end{bmatrix}\hspace{-3pt},
\end{equation}
where the \acrshort{kkt} matrix is asymmetric, to obtain this linear system. By eliminating the slack variables $\Delta s$ and dual variables $\Delta \kappa$, we can obtain a symmetric linear system. This results in the following coupled system:
\begin{align}\label{eq::coupled::barrier::newton}
\begin{bmatrix}
H & J^\top & A^\top\\
J& &\\
A
\end{bmatrix}\begin{pmatrix}
\Delta x\\\Delta\gamma\\\Delta \lambda
\end{pmatrix} = -\begin{pmatrix}
g \\
c^\textrm{E}(x)\\
Ax-b
\end{pmatrix}
\end{align}
with
\begin{align*}
H =&\; \textrm{blkdiag}(H_\ell),\;H_\ell \approx \nabla_{xx}\mathcal{L}_\ell +R_\ell^\top \mathcal{S}_\ell^{-1}\mathcal{K}_\ell R_\ell,\\
g =&\; \textrm{vertcat}(g_\ell),\quad g_\ell = \nabla_x \mathcal{L}_\ell + R_\ell^\top  \mathcal{S}_\ell^{-1} \left(\mu \; \mathds{1} + \mathcal{K}_\ell c_\ell^\textrm{I}(x_\ell)\right),\\
J =&\; \text{blkdiag}(J_\ell),\;\; A = \text{horzcat}(A_\ell).
\end{align*}
The equivalent \acrshort{qp} subproblem
\begin{subequations}\label{eq::distIP::subQP}
\begin{align}
\min_{\Delta x}& \sum_{\ell\in\mathcal{R}}\left\{\frac{1}{2} \Delta x_\ell^\top\;H_\ell \;\Delta x_\ell + g_\ell^\top \Delta x_\ell\right\}\\
\textrm{s.t.}  & \quad  \sum_{\ell\in\mathcal{R}} A_\ell \Delta x_\ell = b- A\, x  \quad \mid\lambda+\Delta \lambda\\%\label{eq::Active}
&  \quad J_\ell \Delta x_\ell = -c_\ell^\text{E}(x_\ell), \;\;\ell \in \mathcal{R}\;.
\end{align}
\end{subequations}
is similar to the coupled QP step in the standard \acrshort{aladin}~\cite[Alg.~2]{Boris2016aladin}.

Next, we follow the Second-Order Multiplier method~\cite{bertsekas1978convergence} to further condense~\eqref{eq::coupled::barrier::newton} using the Schur complement. 
Reordering the elements in the Newton step~\eqref{eq::coupled::barrier::newton}, it can be rewritten as
\begin{align}\label{eq::blkdiag::kkt}
\begin{bmatrix}
\bar{H}_1  & & & \bar{A}_1^\top\\
& \ddots & & \vdots\\
& & \bar{H}_n & \bar{A}_n^\top\\ 
\bar{A}_1 & \cdots & \bar{A}_n & 0
\end{bmatrix}\;\begin{pmatrix}
\Delta \bar{x}_1\\ \vdots \\ \Delta \bar{x}_n \\ \Delta \lambda
\end{pmatrix} = - \begin{pmatrix}
\bar{g}_1\\
\vdots\\
\bar{g}_n\\
Ax-b
\end{pmatrix}
\end{align}
with local primal-dual iterates $\Delta \bar{x}_\ell =\left(\Delta x_\ell^\top,\,\Delta \gamma_\ell^\top \right)^\top$ and curvature information
\begin{align}\label{eq::local::curverture}
\bar{H}_\ell = \begin{bmatrix}
H_\ell & J^\top_\ell\\
J_\ell &  
\end{bmatrix},\; \bar{g}_\ell = \begin{pmatrix}
g_\ell\\ c_\ell^\text{E} (x)
\end{pmatrix}\,  \text{and}\; \bar{A}_\ell = \begin{bmatrix}
A_\ell & 0
\end{bmatrix}
\end{align}
%Moreover, we define the inverse of the extended Hessian $\bar{H}_\ell$ as
%\begin{equation}
%     \bar{H}_\ell^{-1} = \begin{bmatrix}
%        B_\ell & V^\top_\ell\\
%        V_\ell & \alpha
%\end{bmatrix}
%\end{equation}
or written in a compact form as an alternative:
\begin{align}\label{eq::blkdiag::kkt::22}
\begin{bmatrix}
\bar{H}  & \bar{A}^\top\\
\bar{A}  & 0
\end{bmatrix}\;\begin{pmatrix}
\Delta \bar{x} \\ \Delta \lambda
\end{pmatrix} = - \begin{pmatrix}
\bar{g}\\
Ax-b
\end{pmatrix}
\end{align}
with block diagonal matrix $\bar{H} = \text{blkdiag}(\bar{H}_\ell)$. By using the Schur complement, we have
\begin{equation*}\label{eq::kkt::som}
W \Delta \lambda  = - h
\end{equation*}
with
\begin{align*}
W = \sum_{\ell\in\mathcal{R}} W_\ell,\;h = - b + \sum_{\ell\in\mathcal{R}} h_\ell,
\end{align*}
where $W_\ell$ and $h_\ell$ can be computed in each agent $\ell$ in parallel
\begin{align*}
W_\ell =& \; - \bar{A}_\ell \; \bar{H}_\ell^{-1} \bar{A}^\top_\ell\\
h_\ell=& \; A_\ell x_\ell - \bar{A}_\ell \; \bar{H}_\ell^{-1} \bar{g}_\ell
\end{align*}
Once the coordinator receives the dual Hessian $W_\ell$ and the dual gradient $h_\ell$ from all local agents $\ell\in\mathcal{R}$, and a descent direction can be guaranteed, then we solve the coordination problem in dual space~\eqref{eq::coordination}; otherwise, Algorithm 2 is called to correct the \text{inertia} of the dual Hessian $W$, which is a symmetric matrix. Details about the criterion of descent direction and corresponding modification will be discussed in the following section.     
\begin{remark}\label{rmk::schur::criterions}
The Schur complement method is particularly effective when 1) $\bar{H}_\ell$ is easy to invert or 2) the number of consensus constraints, denoted by $N^\lambda$, is small, which reduces the complexity of the matrix-matrix product $\bar{H}^{-1}_\ell \bar{A}_\ell^\top$. 
\end{remark}
\begin{remark}    
Further condensing the KKT linear systems~\eqref{eq::coupled::barrier::newton} serves to reduce the communication effort between agents and the coordinator, such that only the dual Hessian $W_\ell$ and the dual gradient $g_\ell$ need to be communicated. %In contrast to the standard ALADIN approach, where local gradients, Jacobians, and Hessians must be sent to the coordinator (as illustrated in Fig.~\ref{fig::aladin::SeqFig}), 
This not only reduces the data exchanged between agents and the coordinator but also distributes part of the coordinator's computational tasks to the local agents. By allowing these tasks to be handled in parallel by the agents, the overall computational burden on the coordinator is reduced, thereby improving the total computation time.
\end{remark}

Once the coordination problem~\eqref{eq::coordination} is solved, the dual step $\Delta \lambda$ is sent back to the local agents, and the full primal-dual step can be recovered locally by~\eqref{eq::recover}:
\begin{align*}
\begin{pmatrix}
\Delta x_\ell\\\Delta \gamma_\ell
\end{pmatrix} &= -\; \bar{H}_\ell^{-1} \bar{A}_\ell^\top \Delta \lambda  - \bar{g}_\ell,\\
\Delta s_\ell &= -\; c^\textrm{I}_\ell(x_\ell) - s_\ell - R_\ell \; \Delta x_\ell,\\
\Delta \kappa_\ell &= -\; \kappa_\ell + \mathcal{S}_\ell^{-1} (\mu\;\mathds{1} - \mathcal{K}_\ell \Delta s_\ell),
\end{align*}

Having recovered local variables in each agent, primal and dual steplengths $\beta^\textrm{p}_\ell$, $\beta^\textrm{d}_\ell~\in~(0,1]$  are determined by the fraction-to-the-boundary rule in each local agent $\ell\in\mathcal{R}$, with
\begin{equation}\label{eq::tau}
\tau = \max{(\tau_{\min},\,1-\mu)}
\end{equation}
By evaluating steplengths locally, no full step information is required during the coordination.

The primal-dual iterates are then updated according to~\eqref{alg::distIP::line}, where the shortest primal step length~\eqref{step::distIP::coordinate} is used. If a globalization strategy is required, the parameters $\alpha_1$, $\alpha_2$, $\alpha_3$ are set as defined by ~\cite[Alg.~3]{Boris2016aladin}. Alternatively, for full-step updates, set $\alpha_1=\alpha_2=\alpha_3=1$.

Additionally, the dual variables $(\gamma_\ell,\kappa_\ell)$ can be updated locally by
\begin{align*}
\kappa^+_\ell&=\kappa_\ell+ \alpha_2 \, \beta^\textrm{d}_\ell\Delta \kappa_\ell, \quad&\forall\ell\in\mathcal{R}&\\
\gamma^+_\ell&=\gamma_\ell+ \alpha_2 \,\beta^\textrm{p}  \Delta\gamma_\ell, \quad&\forall\ell\in\mathcal{R}
\end{align*}
These updates help refine the initial guess for the decoupled \acrshort{nlp}s~\eqref{eq::problem::decoupled::barrier} in subsequent iterations with additional communication overhead.

%Note that an additional synchronization step is required to maintain the positive definiteness of the primal-dual pairs $(s_\ell,\kappa_\ell)$ for all region $\ell\in\mathcal{R}$. This could be the drawback of using \acrshort{aladin} to solve the original problem.

\subsection{Distributed Inertia Correction}\label{sec::distributedIC}
In nonconvex optimization, a standard Newton step may not provide a descent direction if the KKT system is not regular. We define the inertia of a symmetric matrix to assess this regularity.
\begin{definition}[\textbf{Inertia~\cite{gould1985practical}}]
The inertia of a symmetric matrix $K$ is the triple: $$\mathrm{inertia}(K) = (n^+,n^-,n^0),$$ where $n^+$, $n^-$, and $n^0$ denote the number of positive, negative, and zero eigenvalues of $K$, respectively.
\end{definition}
To ensure that the quadratic programming (QP) subproblems derived from our nonconvex AC \acrshort{opf} are well-behaved, the KKT matrix must satisfy specific inertia conditions:
\begin{theorem}[\textbf{Inertia Condition~\cite[Thm.~{2.1}]{gould1985practical}}]\label{thm::inertia_condition}
Let $K$ be a KKT matrix defined as
\[
K = \begin{bmatrix} H & J^\top \\ J & 0 \end{bmatrix}, 
\quad \text{with } H \in \mathbb{R}^{n^x \times n^x}, \, J \in \mathbb{R}^{n^c \times n^x}.
\]
Suppose $J$ is of full row rank and 
\begin{equation}\label{eq::inertia}
\mathrm{inertia}(K) = (n^x, \, n^c, \, 0).
\end{equation}
Then, $H$ is positive definite on the null space of $J$, and the corresponding QP subproblem has a strict minimizer.
\end{theorem}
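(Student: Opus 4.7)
The plan is to obtain positive definiteness of the reduced Hessian via Sylvester's law of inertia, and then invoke strict convexity of the reduced QP on its affine feasible set. Because $J\in\mathbb{R}^{n^c\times n^x}$ has full row rank, $\ker(J)$ has dimension $n^x-n^c$, so I would first fix a basis $Z\in\mathbb{R}^{n^x\times(n^x-n^c)}$ of $\ker(J)$ and complete it with $Y\in\mathbb{R}^{n^x\times n^c}$ such that $JY$ is nonsingular and $[Y\;\; Z]$ is nonsingular.

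The core step is to derive the standard inertia decomposition
$$\operatorname{inertia}(K) \;=\; \operatorname{inertia}(Z^\top H Z)\; +\; (n^c,\,n^c,\,0).$$
I would obtain it by applying the block congruence $\tilde T = \operatorname{blkdiag}([Y\;\; Z],\,I_{n^c})$ to $K$ and then using Haynsworth's inertia additivity. A direct computation shows that $\tilde T^\top K \tilde T$, after reordering so that the $Y$-coordinates are grouped with the multipliers, decomposes into a $(2n^c)\times(2n^c)$ corner block of saddle-point form whose off-diagonal $JY$ is nonsingular — hence with inertia $(n^c,n^c,0)$ — and a remaining block whose Schur complement equals exactly $Z^\top H Z$, since the cross terms $B^\top A^{-1} B$ vanish (the inverse of the corner block has a zero $(1,1)$ entry, and $B$ populates only the first $n^c$ rows). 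Combined with the hypothesis $\operatorname{inertia}(K)=(n^x,n^c,0)$, this forces $\operatorname{inertia}(Z^\top H Z)=(n^x-n^c,\,0,\,0)$, i.e.\ $Z^\top H Z\succ 0$, so $H$ is positive definite on $\ker(J)$.

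For the QP claim, I would reduce to an unconstrained problem on the null space. Feasibility of the QP $\min\,\tfrac12 d^\top H d + g^\top d$ subject to $Jd=r$ follows from $J$ having full row rank; parametrizing $d = d_0 + Zv$ for a particular feasible $d_0$ reduces it to
$$\min_{v}\;\tfrac12\,v^\top(Z^\top H Z)\,v\;+\;\tilde g^\top v,$$
which is strictly convex and coercive because $Z^\top H Z\succ 0$. Hence it admits a unique, strict global minimizer $v^\star$, and the corresponding $(d^\star,\lambda^\star)$ is the unique solution of the KKT system associated with $K$.

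The main obstacle will be the algebraic bookkeeping that isolates the $(n^c,n^c)$-signature corner block cleanly, specifically verifying that the Schur complement absorbs the $Y^\top H Y$ and $Y^\top H Z$ couplings so that only $Z^\top H Z$ survives. Once that block decomposition is in place, reading off inertias via Sylvester's law and invoking strict convexity on the reduced coordinates are routine.
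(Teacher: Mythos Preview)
The paper does not actually prove this theorem; it is quoted from \cite[Thm.~2.1]{gould1985practical} and used as a known result. Your argument is a correct and standard proof of this classical fact. The congruence by $\operatorname{blkdiag}([Y\;Z],I_{n^c})$ followed by Haynsworth additivity applied to the $2n^c\times 2n^c$ corner block (whose off-diagonal $JY$ is nonsingular, giving it inertia $(n^c,n^c,0)$) does yield $\operatorname{inertia}(K)=(n^c,n^c,0)+\operatorname{inertia}(Z^\top H Z)$, and the QP reduction via $d=d_0+Zv$ is routine once $Z^\top HZ\succ 0$. Your stated worry about the $Y^\top HY$ and $Y^\top HZ$ couplings is unfounded: the $(1,1)$ block of the inverse of the corner matrix is indeed zero, so the Schur complement collapses exactly to $Z^\top HZ$.

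It is worth noting that Haynsworth's additivity, which you invoke here, is precisely the paper's Lemma~\ref{lem::additivity}; the authors state it not to prove Theorem~\ref{thm::inertia_condition} but to derive their distributed inertia condition in Corollary~\ref{cor::descent}. In that sense your proof anticipates the same tool the paper relies on downstream.
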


In a centralized setting, the perturbed Newton step~\eqref{eq::coupled::barrier::newton} provides a strict descent direction if the global KKT matrix
\begin{equation}\label{eq::kkt::distributed}
K = \begin{bmatrix}
H & J^\top & A^\top\\
J& &\\
A
\end{bmatrix}
\end{equation}
with $K\in\mathbb{R}^{(N^x+N^E+N^\lambda)\times (N^x+N^E+N^\lambda)}$ satisfies:
\begin{equation}\label{eq::perturbedKKT::condition}
\mathrm{inertia}(K) = (N^x ,\, N^\text{E}+N^\lambda, \, 0)
\end{equation}
%¥where $N^x = \sum_{\ell \in \mathcal{R}} N_\ell^x$ and $N^\text{E} = \sum_{\ell \in \mathcal{R}} N_\ell^\text{E}$, and $K$ denotes the corresponding perturbed KKT matrix in~\eqref{eq::kkt::distributed}. 
However, in the proposed distributed framework (Algorithm~\ref{alg::distIP}), the central coordinator does not have direct access to the global KKT matrix~\eqref{eq::kkt::distributed}. 

To bridge this gap, we utilize the Haynsworth Inertia Additivity property to the related inertia of the global KKT matrix~\eqref{eq::kkt::distributed} to the local data handled by individual agents:
\begin{lemma}[\textbf{Haynsworth Inertia Additivity~\cite{haynsworth1968determination}}]\label{lem::additivity}
Let $K$ be a Hermitian matrix partitioned as
\[
K = \begin{bmatrix} K_{11} & K_{21}^\top \\ K_{21} & K_{22} \end{bmatrix}.
\]
If the submatrix $K_{11}$ is nonsingular and $\Lambda = K_{22} - K_{21} K_{11}^{-1} K_{21}^\top$ is the Schur complement of $K_{11}$ in $K$, then
\[
\mathrm{inertia}(K) = \mathrm{inertia}(K_{11}) + \mathrm{inertia}(\Lambda).
\]
\end{lemma}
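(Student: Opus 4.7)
The plan is to reduce the lemma to Sylvester's law of inertia by exhibiting an explicit congruence transformation that block-diagonalizes $K$. Since $K_{11}$ is assumed nonsingular, the classical block $LDL^\top$-type factorization is available. Specifically, I would verify by direct block multiplication that
\begin{equation*}
K = L^\top \begin{bmatrix} K_{11} & 0 \\ 0 & \Lambda \end{bmatrix} L, \qquad L := \begin{bmatrix} I & K_{11}^{-1} K_{21}^\top \\ 0 & I \end{bmatrix}.
\end{equation*}
Expanding the right-hand side reproduces $K_{11}$ in the $(1,1)$ block, $K_{21}^\top$ in the $(1,2)$ block (and $K_{21}$ in the $(2,1)$ block by symmetry), and $K_{21} K_{11}^{-1} K_{21}^\top + \Lambda = K_{22}$ in the $(2,2)$ block, confirming the identity.

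Since $L$ is unit upper triangular, it is nonsingular, so the identity above is a congruence relating $K$ to the block-diagonal matrix $\mathrm{blkdiag}(K_{11},\Lambda)$. By Sylvester's law of inertia, congruent Hermitian matrices share the same inertia, and the inertia of a block-diagonal Hermitian matrix is the sum of the inertias of its diagonal blocks. Combining these two observations yields
\begin{equation*}
\mathrm{inertia}(K) = \mathrm{inertia}(K_{11}) + \mathrm{inertia}(\Lambda),
\end{equation*}
which is the claimed identity.

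Before invoking Sylvester's law, one should check that $\Lambda$ is itself Hermitian, so that $\mathrm{inertia}(\Lambda)$ is well defined. This follows from $K = K^\top$, which forces $K_{11}^\top = K_{11}$ and $K_{22}^\top = K_{22}$, so that
\begin{equation*}
\Lambda^\top = K_{22}^\top - (K_{21} K_{11}^{-1} K_{21}^\top)^\top = K_{22} - K_{21} K_{11}^{-\top} K_{21}^\top = K_{22} - K_{21} K_{11}^{-1} K_{21}^\top = \Lambda.
\end{equation*}
There is no substantive obstacle in this argument: both ingredients, namely the block $LDL^\top$ factorization and Sylvester's law, are entirely classical, and the remaining effort is a routine verification of the $2\times 2$ block product. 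The only practical caveat worth flagging is the assumption that $K_{11}$ is invertible, which is exactly what has to be arranged on the local blocks $\bar{H}_\ell$ in order to apply the lemma within the distributed inertia-correction procedure of Algorithm~\ref{alg::distributed::correction}.
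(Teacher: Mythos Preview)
Your argument is correct and is in fact the standard proof of Haynsworth's inertia additivity formula: the block $LDL^\top$ factorization combined with Sylvester's law of inertia. The verification of the congruence and of $\Lambda^\top=\Lambda$ is accurate.

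Note, however, that the paper does not actually prove this lemma. It is stated with a citation to Haynsworth's original article~\cite{haynsworth1968determination} and then used as a black box in the proof of Corollary~\ref{cor::descent}. So there is no ``paper's own proof'' to compare against; you have supplied a self-contained proof where the paper simply invokes the literature. Your approach is the classical one and is essentially what one finds in Haynsworth's paper and in standard references, so there is no divergence in method---only in level of detail.
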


Applying this lemma to the proposed condensed system enables verification of the global descent condition via parallel local evaluations.
\begin{corollary}[\textbf{Distributed Inertia Condition}]\label{cor::descent}
Let $K$ be the global KKT matrix~\eqref{eq::kkt::distributed}
and let the global Jacobian $\bar{J} = [J^\top, A^\top]^\top$ be of full row rank. Suppose the dual Hessian $W$ satisfies:
\begin{equation}\label{eq::dist::inertia}
\mathrm{inertia}(W) = (N^x, \, N^\text{E}+N^\lambda, \, 0) - \sum_{\ell \in \mathcal{R}} \mathrm{inertia}(\bar{H}_\ell),
\end{equation}
then $H$ is positive definite on the null space of the Jacobian $\bar{J}$ and the equivalent QP subproblem has a strict local minimizer.
\end{corollary}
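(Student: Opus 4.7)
The plan is to reduce Corollary~\ref{cor::descent} to Theorem~\ref{thm::inertia_condition} by computing the inertia of the global KKT matrix $K$ in~\eqref{eq::kkt::distributed} in terms of the local blocks $\bar{H}_\ell$ and the dual Hessian $W$, using the Haynsworth additivity formula from Lemma~\ref{lem::additivity}. The crucial observation to exploit is that, after a symmetric permutation grouping each region's primal variables with its local equality multipliers, $K$ coincides with the reordered block system~\eqref{eq::blkdiag::kkt}, whose Schur complement with respect to the block-diagonal part equals exactly $W$ by the condensing identity~\eqref{eq::condensing::dualHessian}.

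First I would apply that symmetric permutation to $K$, writing the result in the compact form $\tilde{K} = \begin{bmatrix} \bar{H} & \bar{A}^\top \\ \bar{A} & 0 \end{bmatrix}$ with $\bar{H} = \mathrm{blkdiag}(\bar{H}_\ell)$ and $\bar{A} = \mathrm{horzcat}(\bar{A}_\ell)$. Because symmetric permutation is a congruence, inertia is preserved, so $\mathrm{inertia}(K) = \mathrm{inertia}(\tilde{K})$ and $\mathrm{inertia}(\bar{H}) = \sum_{\ell \in \mathcal{R}} \mathrm{inertia}(\bar{H}_\ell)$. Next, applying Lemma~\ref{lem::additivity} to $\tilde{K}$ with $K_{11} = \bar{H}$, the Schur complement is $\Lambda = 0 - \bar{A}\bar{H}^{-1}\bar{A}^\top = \sum_{\ell \in \mathcal{R}} \bigl(-\bar{A}_\ell \bar{H}_\ell^{-1} \bar{A}_\ell^\top\bigr) = \sum_{\ell \in \mathcal{R}} W_\ell = W$, which matches exactly the centralized dual Hessian assembled in~\eqref{eq::coordination}. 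Haynsworth's lemma then yields $\mathrm{inertia}(K) = \sum_{\ell \in \mathcal{R}} \mathrm{inertia}(\bar{H}_\ell) + \mathrm{inertia}(W)$, and substituting the hypothesis~\eqref{eq::dist::inertia} collapses the local contributions to give $\mathrm{inertia}(K) = (N^x,\, N^\mathrm{E} + N^\lambda,\, 0)$.

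With this identity in hand, I would invoke Theorem~\ref{thm::inertia_condition} on the pair $(H, \bar{J})$ where $\bar{J} = [J^\top, A^\top]^\top$. The full-row-rank assumption on $\bar{J}$ is supplied by hypothesis, and the computed inertia of $K$ matches the regularity condition~\eqref{eq::inertia} with $n^x = N^x$ and $n^c = N^\mathrm{E} + N^\lambda$. The theorem then certifies that $H$ is positive definite on $\mathrm{null}(\bar{J})$ and that the equivalent QP subproblem~\eqref{eq::distIP::subQP} admits a strict local minimizer, which is exactly the claim of the corollary.

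The main obstacle is not algebraic but a well-posedness technicality that must be stated cleanly: Haynsworth's lemma requires $\bar{H}$ to be nonsingular so that the Schur complement $\bar{A}\bar{H}^{-1}\bar{A}^\top$ is defined. This is implicit in the hypothesis---if every $\bar{H}_\ell$ has no zero eigenvalue, then so does the block-diagonal $\bar{H}$---but in a rigorous write-up I would either add this as an explicit assumption or note that Algorithm~\ref{alg::distributed::correction} enforces it by regularizing any local block whose inertia deviates from the required count. Beyond this, the proof is a clean two-step reduction: permutation-invariance plus Haynsworth to translate the distributed inertia condition into the centralized one, then Theorem~\ref{thm::inertia_condition} to translate the centralized inertia condition into descent.
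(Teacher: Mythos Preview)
Your proposal is correct and follows essentially the same route as the paper: apply Haynsworth additivity (Lemma~\ref{lem::additivity}) to the reordered system~\eqref{eq::blkdiag::kkt} to obtain $\mathrm{inertia}(K)=\mathrm{inertia}(W)+\sum_{\ell}\mathrm{inertia}(\bar{H}_\ell)$, then conclude via the centralized inertia condition~\eqref{eq::perturbedKKT::condition}. Your write-up is in fact more explicit than the paper's, spelling out the permutation-congruence step and the nonsingularity of $\bar{H}$ needed for the Schur complement, both of which the paper leaves implicit.
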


\begin{proof}
As a a direct result of Lemma~\ref{lem::additivity}, we have 
\begin{align*}
\mathrm{inertia}(K) =& \;\mathrm{inertia}(W)+  \mathrm{inertia}(\bar{H}) \\
    &\,\Downarrow \text{block diagonal structure of $\bar{H}$ in~\eqref{eq::blkdiag::kkt}}\notag\\
  =& \;\mathrm{inertia}(W) + \sum_{\ell\in\mathcal{R}} \mathrm{inertia}(\bar{H}_\ell).
\end{align*}
Then, the conclusion follows from condition~\eqref{eq::perturbedKKT::condition}.
\end{proof}

Condition~\eqref{eq::dist::inertia} decentralizes the verification of the global inertia requirement~\eqref{eq::perturbedKKT::condition}. By evaluating the local inertia of $\bar{H}_\ell$ in parallel, agents provide the necessary data for the coordinator to verify the descent properties of the condensed system. Inspired by the centralized IPOPT framework, we propose a Distributed Inertia Correction (Algorithm~\ref{alg::distributed::correction}) to dynamically regularize the system whenever condition~\eqref{eq::dist::inertia} is violated.
{
\begin{algorithm}
\caption{Distributed inertia Correction}\label{alg::distributed::correction}
\small
\textbf{Input:} $\delta^\text{last}_x$, $\mathrm{inertia}(W)$, $\mathrm{inertia}(\bar{H}),\;\forall \ell\in\mathcal{R}$
\begin{algorithmic}[1]
%\State $(n^+,n^-,n^0)= \mathrm{inertia}(W)$
\If{the \text{inertia} condition~\eqref{eq::dist::inertia} is satisfied}%\Comment{validate unmodified matrix}
\State $\delta^\text{last}_x = 0$
\Else
\If{$\delta^\text{last}_x= 0$}
\State $\eta^\text{inc} = \eta^\text{fast}$ and $\delta^x = \delta^{x}_0$
\Else
\State  $\eta^\text{inc} = \eta^\text{slow}$ and $\delta^x = \max\left(\eta^\text{red}\;\delta^x_0,\;\bar{\delta}^x\right) $
\EndIf \State\textbf{end if}
\If{ $n^0 = 0$}
\State $\delta^\gamma=0$
\Else
\State   $\delta^\gamma=\delta^\gamma_0$
\EndIf \State\textbf{end if}
\While{the \text{inertia} condition~\eqref{eq::dist::inertia} is not satisfied}%\vspace{5pt}
\State $ \mathrm{inertia}(\bar{H}_\ell)= \mathrm{inertia}\left(\begin{bmatrix}
    H_\ell + \delta^x I & J_\ell^\top\\
    J_\ell &  -\delta^\gamma I
\end{bmatrix}\right)$
\State $ \mathrm{inertia}(W)= \mathrm{inertia} \left(-\sum_{\ell\in\mathcal{R}} \bar{A}_\ell \; \bar{H}_\ell^{-1} \bar{A}^\top_\ell\right)$
\State $\delta^\text{last}_x = \delta^x$
\State $\delta^x = \eta^\text{inc} \delta^x$
\EndWhile \State\textbf{end while}
\EndIf \State\textbf{end if}
\end{algorithmic}
\textbf{Return} $\delta^\text{last}=\delta^x$
\end{algorithm}%\vspace{-2pt}
}

Algorithm~\ref{alg::distributed::correction} is triggered whenever the distributed inertia condition~\eqref{eq::dist::inertia} fails. It initializes the primal regularization parameter $\delta^x$ from the previous Newton step and then increases $\delta^x$ gradually until the inertia condition~\eqref{eq::dist::inertia} is satisfied. This ensures the smallest perturbation needed to recover a well-behaved search direction, while preserving the distributed structure of the method.

\begin{remark}\label{rem::inertia::correction}
The dual regularization $\delta^\gamma>0$ addresses rank deficiency in the constraint Jacobian (LICQ failure), while the primal regularization $\delta^x>0$ enforces a curvature condition consistent with second-order sufficiency (SOSC). Together, they allow progress even when these regularity conditions do not hold at intermediate iterations.
\end{remark}
\begin{remark}
Although the evaluation of the inertia of the local matrix $\bar{H}_\ell$ can be evaluated in parallel by local agents during the condensation of the derivatives~\eqref{eq::condensing}, the condensation process itself is not computationally inexpensive. It is particularly expensive if the local agents are densely coupled and  $N^\lambda$ is relatively large, as noted in Remark~\ref{rmk::schur::criterions}. 
Furthermore, Algorithm~\ref{alg::distributed::correction} introduces additional iterative communication, which can further increase the overall computational overhead.
\end{remark}
\subsection{Global Convergence Guarantees}\label{sec::global::convergence}
\change{This section establishes global convergence guarantees for Algorithm~\ref{alg::distIP} applied to smooth, nonconvex problems in the distributed form~\eqref{eq::problem::original}. We begin by stating the basic
smoothness assumption used in both the global and local convergence analyses:
\begin{assumption}[\bf{Smoothness}]\label{ass::regular::distrIP}
For each region $\ell\in\mathcal{R}$, the local functions $f_\ell$, $c_\ell^\mathrm{E}$ and $c_\ell^\mathrm{I}$ are twice Lipschitz-continuously differentiable on the local feasible set $\mathcal{F}_\ell := \{x_\ell \mid c_\ell^\mathrm{E}(x_\ell)=0,\; c_\ell^\mathrm{I}(x_\ell)\le 0\}$.
\end{assumption}
Although ALADIN is formulated for inequality-constrained problems~\cite[Eq.~1.1]{Boris2016aladin}, its coordination QP is built from linearized equalities and locally active inequalities, and both are imposed as equalities in the QP (cf.~\cite[Alg.~2, Step~3]{Boris2016aladin}).}

\change{Since the barrier subproblem~\eqref{eq::DistProblem::barrier} with fixed $\mu>0$ is a smooth equality-constrained NLP, the global convergence result for standard ALADIN~\cite[Thm.~2]{Boris2016aladin} applies directly: for each fixed $\mu$, the inner ALADIN iterates converge to the corresponding barrier solution $x^\star(\mu)$. Once the inner loop meets the stopping criterion, the barrier parameter $\mu$ is decreased monotonically. This yields the following global convergence statement for Algorithm~\ref{alg::distIP}:
\begin{theorem}[\bf{Global Convergence}]\label{Thm}
Let Assumption~\ref{ass::regular::distrIP} hold, let the matrices $\Sigma_\ell$ be positive definite and the penalty parameter $\rho$ be sufficiently large. % such that  $$\nabla_{xx} L_\ell +R_\ell^\top \mathcal{S}_\ell^{-1}\mathcal{K}_\ell R_\ell +  \rho \Sigma_\ell$$
%is positive definite on the null space of $J_\ell$ for all $\ell\in\mathcal{R}$
Suppose the line-search parameters $\alpha_1,\;\alpha_2,\;\alpha_3$ are determined by the globalization strategy~\cite[Alg.~3]{Boris2016aladin}, then Algorithm~\ref{alg::distIP} will terminate after a finite number of iterations.
\end{theorem}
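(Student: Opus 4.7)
The plan is to decouple the analysis into the outer barrier loop and the inner \acrshort{aladin} loop, establish finite termination of each, and then bridge the perturbed residual $E^{\mu_\nu}$ to the original residual $E^0$. The non-trivial step is the inner loop, because it requires linking the condensed coordination~\eqref{eq::coordination} back to the standard \acrshort{aladin} \acrshort{qp} direction before one can invoke~\cite[Thm.~2]{Boris2016aladin} as a black box.

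First, for each fixed barrier parameter $\mu_\nu > 0$, I would note that~\eqref{eq::DistProblem::barrier} is a smooth equality-constrained \acrshort{nlp} in the enlarged variable $(x,s)$. Assumption~\ref{ass::regular::distrIP}, together with the fraction-to-boundary rule that keeps $s_\ell,\kappa_\ell$ strictly positive (Remark~\ref{rmk::f2b}), ensures that the barrier objective $f^\mu$ and all constraints remain twice Lipschitz-continuously differentiable along the iterates. I would then argue that the condensed Newton direction~\eqref{eq::coordination}, obtained by Schur complementation of the primal-dual system~\eqref{eq::dist::kkt::pd::system}, together with the recovery~\eqref{eq::recover}, reproduces exactly the primal-dual direction produced by the coupled \acrshort{aladin} \acrshort{qp}~\eqref{eq::distIP::subQP}; and the Distributed Inertia Correction (Algorithm~\ref{alg::distributed::correction}) delivers, via Corollary~\ref{cor::descent}, the strict descent property required by the merit function. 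With $\Sigma_\ell\succ 0$, $\rho$ sufficiently large, and the globalization~\cite[Alg.~3]{Boris2016aladin}, the global convergence result~\cite[Thm.~2]{Boris2016aladin} then applies directly, so the inner stopping criterion $E^{\mu_\nu}\leq \eta^-\mu_\nu$ in~\eqref{eq::optimality::barrier} is triggered after finitely many inner iterations.

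Second, the outer loop is driven by~\eqref{eq::barrier::update}, which forces $\mu_{\nu+1}\leq\mu_\nu/5$ until the floor is hit, so $\mu_\nu$ decays geometrically. To bridge $E^{\mu_\nu}$ and $E^0$, I would use that the two residuals agree in every component except complementarity, where the triangle inequality gives
\begin{equation*}
\|\mathcal{S}_\ell\kappa_\ell\|_\infty \;\leq\; \|\mathcal{S}_\ell\kappa_\ell - \mu_\nu\mathds{1}\|_\infty + \mu_\nu,
\end{equation*}
so $E^0 \leq (1+\eta^-)\mu_\nu$ at the end of each inner loop, and the outer termination check~\eqref{eq::optimality::original} is triggered once $\mu_\nu$ is small enough, which happens after finitely many outer iterations because of the geometric decay.

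The hard part will be verifying that the Schur-complement-based coordination of Algorithm~\ref{alg::distIP} truly produces the same search direction as the coupled \acrshort{aladin} \acrshort{qp} and that the Distributed Inertia Correction, combined with the fraction-to-boundary step lengths, preserves the merit-function-decrease properties assumed by the globalization in~\cite[Alg.~3]{Boris2016aladin}. Once this correspondence is in place, Algorithm~\ref{alg::distIP} reduces to a standard interior-point wrapping of \acrshort{aladin}, and the combination of finite inner termination and geometric outer decay yields finite termination of the overall scheme.
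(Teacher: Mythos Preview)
Your proposal is correct and follows essentially the same two-step argument as the paper: invoke~\cite[Thm.~2]{Boris2016aladin} on the smooth equality-constrained barrier problem~\eqref{eq::DistProblem::barrier} for finite inner termination at each fixed $\mu_\nu$, then use the monotone decrease of $\mu_\nu$ from~\eqref{eq::barrier::update} for finite outer termination. If anything, your version is more careful than the paper's own proof---you explicitly justify why the condensed coordination~\eqref{eq::coordination} reproduces the standard \acrshort{aladin} \acrshort{qp} direction (so that~\cite[Thm.~2]{Boris2016aladin} legitimately applies), and you supply the triangle-inequality bridge between $E^{\mu_\nu}$ and $E^0$, both of which the paper leaves implicit.
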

\begin{proof}
The proof of Theorem~\ref{Thm} proceeds in two steps. First, for any fixed $\mu>0$, the barrier subproblem~\eqref{eq::DistProblem::barrier} is a smooth equality-constrained NLP. Therefore, applying ALADIN with the globalization routine~\cite[Alg.~3]{Boris2016aladin} yields global convergence to a solution $x^\star(\mu)$, as established in~\cite[Thm.~2]{Boris2016aladin}. Consequently, for a $\mu$-scaled stopping rule~\eqref{eq::optimality::barrier}, there exists a finite number of inner
iterations after which the criterion is satisfied.\\
Second, whenever~\eqref{eq::optimality::barrier} holds, Algorithm~\ref{alg::distIP} decreases the barrier parameter according to~\eqref{eq::barrier::update}. Since $\mu$ is decreased monotonically toward zero, the corresponding barrier solutions $x^\star(\mu)$ approach $\to x^\star(0)=x^*$, where $x^*$ is a solution to the original problem~\eqref{eq::problem::original}. Hence, Algorithm~\ref{alg::distIP} terminates after a finite number of outer loops, and therefore after a finite number of total iterations.
\end{proof}
}

\subsection{Local Convergence Rate}\label{sec::local::convergence}
\change{
To characterize the local convergence rate of the inner ALADIN iterations, we impose the following regularity conditions at the decoupled solutions of the local barrier NLPs.
\begin{assumption}[\textbf{Regularity}]\label{ass::hessian::distrIP}
Consider the barrier subproblem~\eqref{eq::DistProblem::barrier} with a fixed $\mu>0$. At the decoupled solution $x_\ell$ of the decoupled barrier NLP~\eqref{eq::problem::decoupled::barrier}, the following hold for each region $\ell\in\mathcal{R}$:
\begin{enumerate}[label=(\alph*)]
\item the Hessian approximations $H_\ell$ satisfy \label{ass::HH}
$$
\norm{\nabla^2_{xx}\mathcal{L}_{\ell}
+ R_\ell^\top \mathcal{S}_\ell^{-1}\mathcal{K}_\ell R_\ell-H_\ell}
\;=\; \mathcal{O}\!\left(\norm{x_\ell - z_\ell}\right),
$$
\item the penalty parameter $\rho$ is sufficiently large such that\label{ass::rho}
$$
\nabla^2_{xx}\mathcal{L}_{\ell}
+ R_\ell^\top \mathcal{S}_\ell^{-1}\mathcal{K}_\ell R_\ell
+ \rho\,\Sigma_\ell
$$
is positive definite on the null space of $J_\ell$.
\end{enumerate}
Additionally, at the solution $x^*$ of the original problem~\eqref{eq::problem::original}, the following conditions are satisfied:
\begin{enumerate}[label=(\alph*)]
\setcounter{enumi}{2}
\item The \acrfull{licq}, the \acrfull{scc}, and the \acrfull{sosc}.\label{ass::reguarity}
\end{enumerate}
\end{assumption}}

\change{Assumption~\ref{ass::hessian::distrIP}\,\ref{ass::HH} ensures that $H_\ell$ is a sufficiently accurate
second-order approximation near the decoupled solutions $x_\ell$, so the regularization in
Algorithm~\ref{alg::distributed::correction} does not destroy local quadratic convergence.
Assumption~\ref{ass::hessian::distrIP}\,\ref{ass::rho} is mild, since it can always be enforced by taking
$\rho$ large enough (cf.~Theorem~\ref{Thm}). 
If an adaptive adjustment scheme for $\rho$ is needed, one may
use the iterative update in~\cite[Alg.~2, Step~5]{Engelmann2019}, or increase $\rho$ only when the coupling
residual $\norm{Ax-b}$ fails to decrease sufficiently~\cite[Sec.~4.2.2]{bertsekas1997nonlinear}.}

\change{Assumption~\ref{ass::hessian::distrIP}\,\ref{ass::reguarity} ensures that the solution $x^*$ of the original problem is regular. By the implicit function theorem~\cite[Thm.~2.1]{fiacco1976sensitivity}\cite[Thm.~A.2]{nocedal2006numerical}, there exists a locally unique and stable primal-dual central path $x^\star(\mu)$ for sufficiently small $\mu>0$, with $x^\star(\mu)\to x^\star(0)=x^*$ as $\mu\to 0$. This also facilitates the analysis of local convergence rates below.}

\change{In practice, LICQ, SCC, and SOSC may be temporarily violated away from this neighborhood. The proposed approach remains well defined by regularizations: the distributed inertia correction (Algorithm~\ref{alg::distributed::correction}) regularizes the local KKT systems whenever the inertia condition~\eqref{eq::dist::inertia} fails, which addresses negative curvature, related to SOSC failure, and rank deficiency in the constraint Jacobian, related to LICQ failure (cf.~Remark~\ref{rem::inertia::correction}). Meanwhile, the fraction-to-boundary rule~\eqref{eq::fraction2boundary} maintains strict positivity of the local slack variables $s_\ell$ and the corresponding dual variables $\kappa_\ell$, supporting SCC. These safeguards are inspired by centralized regularization~\cite{vanderbei1999interior,wachter2006ipopt} and are included to improve robustness in the nonconvex and occasionally degenerate regimes encountered along practical iterates.
}

\change{
Applying ALADIN to solve the barrier subproblems~\eqref{eq::DistProblem::barrier} with fixed $\mu$ can establish local quadratic convergence of the inner loop of Algorithm~\ref{alg::distIP}.
The following result is a direct application of~\cite[Thm.~1]{ZhaiJunyialadin} to the barrier subproblem~\eqref{eq::DistProblem::barrier} for a fixed barrier parameter $\mu$:
\begin{theorem}   
[\textbf{Quadratic Convergence to $x^\star(\mu)$}]\label{thm::local::aladin::distrIP}
Suppose Assumptions~\ref{ass::regular::distrIP} and~\ref{ass::hessian::distrIP} hold. 
For a fixed $\mu>0$ that is sufficiently small, the inner iterates $x$ generated by Algorithm~\ref{alg::distIP} converge to the barrier solution $x^\star(\mu)$ at a locally quadratic rate, if the full step size is applied, i.e., $\alpha_1= \alpha_2=\alpha_3= 1$.
\end{theorem}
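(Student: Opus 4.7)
The plan is to reduce Theorem~\ref{thm::local::aladin::distrIP} directly to the quadratic convergence result~\cite[Thm.~1]{ZhaiJunyialadin} for standard \acrshort{aladin} on a smooth equality-constrained \acrshort{nlp}. For each fixed $\mu>0$, the barrier subproblem~\eqref{eq::DistProblem::barrier} fits exactly this template: the coupling~\eqref{eq::barrier::consensus} is affine, \eqref{ineq::nonlinear::barrier}-\eqref{eq::barrier::end} are equalities, and the barrier objective $f^\mu_\ell(x_\ell,s_\ell)=f_\ell(x_\ell)-\mu\sum_m \ln(s_\ell^{(m)})$ is twice Lipschitz-continuously differentiable on $\{s_\ell>0\}$ by Assumption~\ref{ass::regular::distrIP} together with the smoothness of the logarithm. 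The fraction-to-boundary rule~\eqref{eq::fraction2boundary} with parameter $\tau$ from~\eqref{eq::tau} keeps $s_\ell$ and $\kappa_\ell$ uniformly bounded away from zero along any subsequence converging to $x^\star(\mu)$, so the barrier objective can be treated as a smooth function on a neighborhood of the iterates.

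Next, I would verify the regularity hypotheses of the cited theorem at $x^\star(\mu)$. Assumption~\ref{ass::hessian::distrIP}\,\ref{ass::reguarity}, combined with the implicit function argument recalled right before the theorem statement, yields for all sufficiently small $\mu$ the existence of a locally unique primal-dual central point $x^\star(\mu)$ at which \acrshort{licq} and \acrshort{sosc} for the barrier subproblem hold; strict complementarity translates into the strict positivity of $s_\ell$ and $\kappa_\ell$, which is again preserved by the fraction-to-boundary safeguard. Assumption~\ref{ass::hessian::distrIP}\,\ref{ass::HH} is precisely the Hessian-approximation condition used in~\cite[Thm.~1]{ZhaiJunyialadin}, written for the barrier Lagrangian, because the barrier curvature $R_\ell^\top \mathcal{S}_\ell^{-1}\mathcal{K}_\ell R_\ell$ is already absorbed in the reduced Hessian $H_\ell$ appearing in~\eqref{eq::coupled::barrier::newton}. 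Assumption~\ref{ass::hessian::distrIP}\,\ref{ass::rho}, in turn, guarantees the positive definiteness of $H_\ell+\rho\Sigma_\ell$ on $\mathrm{null}(J_\ell)$ needed to make the decoupled proximal step~\eqref{eq::problem::decoupled::barrier} locally well-posed and to keep the inertia-correction Algorithm~\ref{alg::distributed::correction} inactive in a neighborhood of $x^\star(\mu)$, so that $\delta^x=\delta^\gamma=0$ eventually and no regularization perturbs the Newton direction.

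Third, I would reconcile the condensed coordination step with the standard \acrshort{aladin} coupling \acrshort{qp}. The dual system~\eqref{eq::coordination} together with the recoveries~\eqref{eq::recover} is, by construction, one Schur-complement elimination of the primal-dual KKT system~\eqref{eq::coupled::barrier::newton} associated with the equality-constrained coupled \acrshort{qp}~\eqref{eq::distIP::subQP}, which is the \acrshort{aladin} coordination subproblem applied to the smooth barrier \acrshort{nlp}. Hence, under the full-step choice $\alpha_1=\alpha_2=\alpha_3=1$ and with $\beta_\ell^{\textrm{p}},\beta_\ell^{\textrm{d}}\to 1$ near $x^\star(\mu)$ (a consequence of the uniform positivity of $s_\ell,\kappa_\ell$), one iteration of Algorithm~\ref{alg::distIP} reproduces one standard \acrshort{aladin} iteration on~\eqref{eq::DistProblem::barrier}. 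Applying~\cite[Thm.~1]{ZhaiJunyialadin} then delivers local quadratic convergence of $x$ to $x^\star(\mu)$.

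The main obstacle is the third step: verifying that the distributed surrogates of the standard \acrshort{aladin} mechanisms, namely the Schur-complement condensation, the slack and dual recoveries, the fraction-to-boundary step lengths, and the inertia-correction routine of Algorithm~\ref{alg::distributed::correction}, are all asymptotically transparent, so that the iteration map is identical to that analysed in~\cite{ZhaiJunyialadin} once the iterates enter a sufficiently small neighborhood of $x^\star(\mu)$. Everything else is a bookkeeping check of the smoothness and regularity hypotheses for the barrier \acrshort{nlp}.
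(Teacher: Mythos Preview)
Your proposal is correct and follows precisely the paper's approach: the paper does not give a separate proof but states Theorem~\ref{thm::local::aladin::distrIP} as ``a direct application of~\cite[Thm.~1]{ZhaiJunyialadin} to the barrier subproblem~\eqref{eq::DistProblem::barrier} for a fixed barrier parameter $\mu$.'' Your three verification steps---smoothness of the barrier \acrshort{nlp}, regularity at $x^\star(\mu)$ via Assumption~\ref{ass::hessian::distrIP}, and the equivalence of the Schur-condensed coordination plus recovery~\eqref{eq::coordination}--\eqref{eq::recover} to the standard \acrshort{aladin} coupled \acrshort{qp}~\eqref{eq::distIP::subQP}---are exactly the details the paper leaves implicit in that one-line justification.
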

For the outer loop, we employ the parameter update routines in~\cite[Strategy~2]{byrd1997local}\cite{wachter2006ipopt}, which yield local superlinear convergence.
\begin{corollary}[\textbf{Superlinear Convergence of Alg.~\ref{alg::distIP}}]\label{cor::local::alg1}
Suppose the assumptions of Theorem~\ref{thm::local::aladin::distrIP} hold. 
Consider Algorithm~\ref{alg::distIP} with the barrier-parameter update~\eqref{eq::barrier::update}, the $\mu$-scaled inner-loop accuracy requirement~\eqref{eq::optimality::barrier}, and the fraction-to-boundary parameter update~\eqref{eq::tau}. 
Then the iterate $x$ generated by Algorithm~\ref{alg::distIP} converges to the solution $x^*$ at a locally superlinear rate.
\end{corollary}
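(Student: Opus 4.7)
The plan is to analyze the convergence of Algorithm~\ref{alg::distIP} as a two-scale process: an outer loop that drives the barrier parameter $\mu_\nu\to 0$ and an inner ALADIN loop that solves each barrier subproblem~\eqref{eq::DistProblem::barrier}. I would start from the triangle-inequality decomposition
\begin{equation*}
\|x^{(\nu)}-x^*\| \;\le\; \|x^{(\nu)}-x^\star(\mu_\nu)\| \;+\; \|x^\star(\mu_\nu)-x^*\|,
\end{equation*}
bound each of the two terms by a power of $\mu_\nu$, and then combine these bounds with the superlinear decrease of $\mu_\nu$ guaranteed by~\eqref{eq::barrier::update}.

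For the central-path gap, I would argue that $x^\star(\mu)$ is smooth near $\mu=0$. Assumption~\ref{ass::hessian::distrIP}\,\ref{ass::reguarity} provides LICQ, SCC, and SOSC at $x^*$, so the Jacobian of the perturbed KKT system~\eqref{eq::dist::perturbedKKT} is nonsingular at $\mu=0$. The implicit function theorem (cf.~\cite[Thm.~2.1]{fiacco1976sensitivity}) then yields a locally $C^1$ central path satisfying $\|x^\star(\mu)-x^*\|=\mathcal{O}(\mu)$, which handles the second term.

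For the inner residual, I would apply Theorem~\ref{thm::local::aladin::distrIP}: for every $\mu_\nu$ sufficiently small the inner iterates converge locally quadratically to $x^\star(\mu_\nu)$, and the $\mu$-scaled stopping rule~\eqref{eq::optimality::barrier} is met in finitely many inner steps. Because the perturbed-KKT Jacobian is uniformly nonsingular along the central path, the residual $E^{\mu_\nu}$ is locally Lipschitz-equivalent to the primal-dual distance to $x^\star(\mu_\nu)$, so the stopping rule translates into $\|x^{(\nu)}-x^\star(\mu_\nu)\|=\mathcal{O}(\mu_\nu)$. Two auxiliary facts enable Theorem~\ref{thm::local::aladin::distrIP} to apply unchanged near the solution: (i) by SCC together with the choice $\tau=\max(\tau_{\min},1-\mu)$ in~\eqref{eq::tau}, the fraction-to-boundary rule~\eqref{eq::fraction2boundary} asymptotically permits full steps $\beta^\mathrm{p}_\ell,\beta^\mathrm{d}_\ell\to 1$; and (ii) by SOSC and LICQ, the distributed inertia condition~\eqref{eq::dist::inertia} is eventually satisfied, so Algorithm~\ref{alg::distributed::correction} is inactive near $x^*$ and the Newton-like structure underlying the quadratic rate is preserved.

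Combining the two bounds yields $\|x^{(\nu)}-x^*\|=\mathcal{O}(\mu_\nu)$, and the matching lower bound $\|x^{(\nu)}-x^*\|=\Theta(\mu_\nu)$ follows from the nondegeneracy of the central path. Since~\eqref{eq::barrier::update} reduces to $\mu_{\nu+1}=\mu_\nu^{1.5}$ once $\mu_\nu$ is small (the branch $\mu_\nu/5$ is inactive and the floor $\epsilon/10$ has not yet been reached), applying the same argument at iteration $\nu+1$ gives $\|x^{(\nu+1)}-x^*\|=\mathcal{O}(\mu_\nu^{1.5})$, and therefore
\begin{equation*}
\frac{\|x^{(\nu+1)}-x^*\|}{\|x^{(\nu)}-x^*\|} \;=\; \mathcal{O}\!\left(\mu_\nu^{1/2}\right) \;\longrightarrow\; 0,
\end{equation*}
which is the claimed superlinear rate. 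The main obstacle in carrying this sketch out rigorously is the inner-residual step: showing that $E^{\mu_\nu}\le\eta^-\mu_\nu$ actually implies $\|x^{(\nu)}-x^\star(\mu_\nu)\|=\mathcal{O}(\mu_\nu)$ with a constant uniform in $\mu$. This requires a careful treatment of the complementarity block $\mathcal{S}_\ell\kappa_\ell=\mu\mathds{1}$ and uniform bounds on the inverse of the barrier KKT Jacobian along the central path, adapting the centralized arguments of~\cite{byrd1997local,wachter2006ipopt} to the condensed distributed structure of Algorithm~\ref{alg::distIP}.
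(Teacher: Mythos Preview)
Your proposal follows the same high-level two-scale strategy as the paper's proof: the paper simply observes that the updates~\eqref{eq::optimality::barrier},~\eqref{eq::barrier::update},~\eqref{eq::tau} coincide with Strategy~2 of~\cite{byrd1997local}, invokes Theorems~2.5 and~3.2 of that reference for the outer-loop superlinear rate (with and without the fraction-to-boundary safeguard), and combines this with the inner-loop quadratic convergence of Theorem~\ref{thm::local::aladin::distrIP}. You are essentially unpacking the Byrd--Liu--Nocedal machinery rather than citing it, and your identification of the key ingredients (central-path smoothness under LICQ/SCC/SOSC, asymptotic acceptance of full steps via~\eqref{eq::tau}, inactivity of the inertia correction near $x^*$) is correct.

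There is, however, a gap in your ratio computation. Your lower bound $\|x^{(\nu)}-x^*\|=\Theta(\mu_\nu)$ does not follow from ``nondegeneracy of the central path'': that gives $\|x^\star(\mu_\nu)-x^*\|=\Theta(\mu_\nu)$, but since $x^{(\nu)}$ lies only within $\mathcal{O}(\mu_\nu)$ of $x^\star(\mu_\nu)$, the reverse triangle inequality yields no positive lower bound unless the constants cooperate (and with the tolerance $\eta^-=10$ they need not). Without this lower bound, your argument delivers only R-superlinear convergence (errors dominated by the Q-superlinearly convergent sequence $\mu_\nu$), not the Q-superlinear ratio you wrote down. The route in~\cite{byrd1997local} to a genuine Q-rate is different: one shows that eventually a single inner Newton step suffices to satisfy~\eqref{eq::optimality::barrier}, so the concatenated iteration becomes a perturbed Newton method whose error contracts Q-superlinearly in the iterate itself, bypassing the triangle-inequality detour through $x^\star(\mu_\nu)$.
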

\begin{proof}
The update rules~\eqref{eq::optimality::barrier}~\eqref{eq::barrier::update}~\eqref{eq::tau} follow the update routine proposed by~\cite[Strategy~2]{byrd1997local}. %and are also used and numerically validated in \ipopt~\cite{wachter2006ipopt}. 
In particular, the barrier update~\eqref{eq::barrier::update} together with the associated $\mu$-scaled inner-loop accuracy requirement~\eqref{eq::optimality::barrier} yields local superlinear convergence for full steps $\beta^{\mathrm{p}}=\beta^{\mathrm{d}}=1$; cf.~\cite[Thm.~2.5]{byrd1997local}. 
Moreover, the update routine for fraction-to-boundary parameter~\eqref{eq::tau} preserves the local convergence property while using fraction-to-boundary rule~\eqref{eq::fraction2boundary} to enfoce $s_\ell>0$ and $\kappa_\ell>0$ for all $\ell\in\mathcal{R}$; cf.~\cite[Thm.~3.2]{byrd1997local}. 
Combining this outer-loop result with the local quadratic convergence of the inner loop to $x^\star(\mu)$ (Theorem~\ref{thm::local::aladin::distrIP}) proves the claim.
\end{proof}}
%The above update rules~\eqref{eq::optimality::barrier}~\eqref{eq::barrier::update}~\eqref{eq::tau} fall into the framework of~\cite[Strategy~2]{byrd1997local} and are also used and numerically validated in \ipopt~\cite{wachter2006ipopt}. In particular, the barrier update~\eqref{eq::barrier::update} and the associated $\mu$-scaled inner-loop accuracy~\eqref{eq::optimality::barrier} yield local superlinear convergence; cf.~\cite[Thm.~2.5]{byrd1997local}. This holds when full steps are accepted, i.e., $\beta^{\mathrm{p}}=\beta^{\mathrm{d}}=1$. Moreover, the fraction-to-boundary update~\eqref{eq::tau} preserves the superlinear rate while enforcing strict positivity of $s_\ell$ and the associated multipliers $\kappa_\ell$ for all $\ell\in\mathcal{R}$; cf.~\cite[Thm.~3.2]{byrd1997local}. Combined with the local quadratic convergence of the inner loop to the barrier solutions $x^\star(\mu)$ (Theorem~\ref{thm::local::aladin::distrIP}), this implies that Algorithm~\ref{alg::distIP} converges locally to $x^*$ with a superlinear rate.

\section{Practical Considerations for Implementation}\label{sec::distIP::implementation}
In this section, we discuss practical considerations for implementing the proposed Algorithm~\ref{alg::distIP} within a distributed or parallel computing environment. To illustrate the interactions in the distributed process, the process sequence of the proposed Algorithm~\ref{alg::distIP} between the coordinator and local agents is depicted in Fig.~\ref{fig::distIP::SeqFig}. This figure emphasizes the flow of information and synchronization steps, visualizing the coordination for better understanding and practical deployment.
\begin{figure}[htbp!]
\centering
\includegraphics[width=0.9\linewidth]{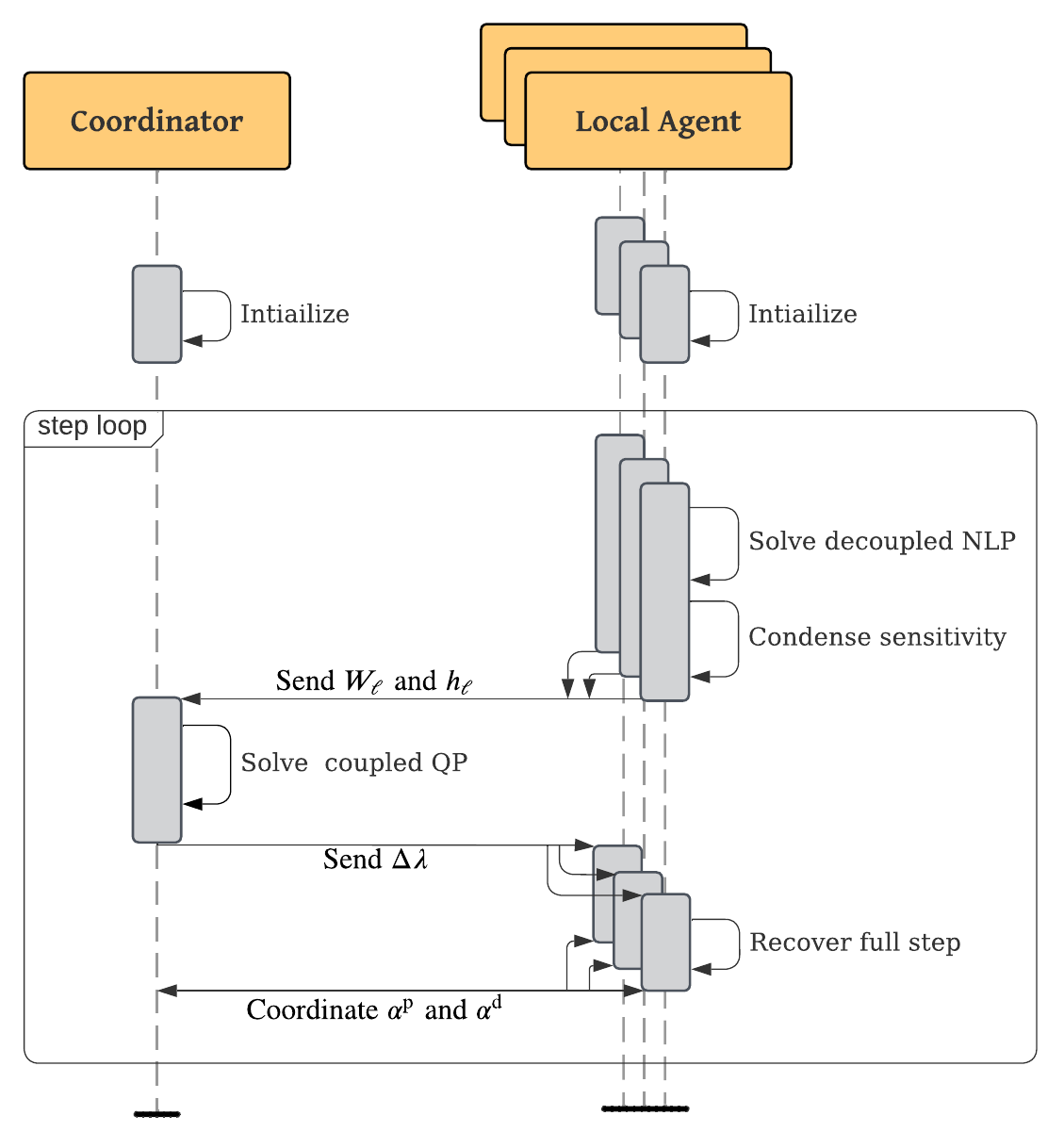}
\caption{Sequence diagram of the proposed Algorithm~\ref{alg::distIP}}
\label{fig::distIP::SeqFig}
\end{figure}

To compare different networks and network decomposition, we define two measures:
\begin{itemize}[leftmargin=15pt]
\item {\bf{Network Density:}} average number of lines per bus,
\begin{equation}\label{eq::network::density}
\zeta = \frac{N^\text{line}}{N^\text{bus}}\in(0,\zeta_{\max}],\quad\mathrm{with}\;
\zeta_{\max}:=\frac{N^{\mathrm{bus}}-1}{2}.
\end{equation}
It measures how densely buses are interconnected and is independent of the partition.

\item {\bf{Coupling Density:}} fraction of coupling variables within a region $\ell\in\mathcal{R}$,
$$\xi_\ell = \frac{N^\text{cpl}_\ell}{N^x_\ell}\in [0,1],$$
where $N^\text{cpl}_\ell$ is the number of coupling variables, and $N^x_\ell$ is the total number of state variables in that region.
The average coupling density across all regions is
$$\overline{\xi} = \frac{1}{N^\text{reg}}\sum_{\ell\in\mathcal{R}}\xi_\ell= \frac{1}{N^\text{reg}}\sum_{\ell\in\mathcal{R}}\frac{N^\text{cpl}_\ell}{N^x_\ell}.$$ 
\end{itemize}
%$Note that a local agent $\ell$ is totally decoupled if $\xi_\ell = 0.$ 
\begin{remark}
For distributed optimization problems~\eqref{eq::problem::original}, $N^{\text{cpl}}_\ell$ equals the number of nonzero rows in $A_\ell$. This indicates how many variables in the region $\ell$ are shared with neighboring regions.
\end{remark}

\subsection{Optimal Graph-based Decomposition}\label{sec::decomposition}
How the power grid is partitioned is essential for reducing problem complexity and enabling efficient parallel computation. Traditional partitioning rules, such as operator-defined areas, hierarchical clustering, or electrical-distance heuristics, are not designed for distributed optimization and often ignore computational coupling, which can strongly affect algorithm performance~\cite{Guo2017}. \change{In particular, the partition influences distributed methods in two ways: it determines the size of the interfaces between regions, captured by the coupling densities $\xi_\ell$ and $\overline{\xi}$, which drives the per-iteration costs of condensation, coordination, and communication; and it shapes the coupling structure itself, leading to different consensus constraints~\eqref{eq:affine1} and thus different optimal multipliers $\lambda^\star$.}

\change{Recent work has proposed optimal decomposition strategies beyond operator-defined areas. For example, spectral partitioning has been used to accelerate first-order algorithms such as \acrshort{admm} on large networks~\cite{guo2015intelligent,Guo2017}. However, these approaches are typically tailored to first-order algorithms and often rely on operating-point information to construct the partition.}

\change{In contrast, Algorithm~\ref{alg::distIP} uses second-order coordination: \acrshort{aladin} solves the
barrier subproblems~\eqref{eq::DistProblem::barrier}, and second-order information is condensed via a
Schur-complement step. We summarize the partitioning effects on globalization and local efficiency in the following remarks:
\begin{remark}[\bf Partitioning and Global Convergence]\label{rmk::global}
Partitioning affects the global behavior through the coupling structure and the associated optimal
multiplier $\lambda^\star$. In particular, it can influence the efficiency of the globalization routine
Algorithm~\ref{alg::basicGlobal}. When the initial multiplier is poor, i.e.,
$\|\lambda^0-\lambda^\star\|$ is large, the dual line-search step~\eqref{eq::alpha3} may accept smaller
steps and lead to slower progress in early iterations.
\end{remark}
In our AC OPF experiments, the initialization is typically close enough that local efficiency dominates convergence performance. We therefore emphasize the local effect of partitioning in the following:
\begin{remark}[\bf Partitioning and Local Convergence]\label{rmk::partition}
The local convergence analysis (Corollary~\ref{cor::local::alg1}) shows that the \emph{local} convergence rate of Algorithm~\ref{alg::distIP}, measured in iteration count, does not depend on the specific graph partition. Hence, varying the number of partitions is not expected to significantly affect the number of iterations. In practice, small differences may still arise for numerical reasons, since finer partitions typically create larger and denser interfaces, which can affect the numerical behavior of the KKT condensing step~\eqref{eq::condensing}; see~\cite{gill1991solving,shin2024accelerating}. By contrast, the primary effect of increasing the number of partitions is on the per-iteration computational cost: derivative condensation~\eqref{eq::condensing}, the coordination solve~\eqref{eq::coordination}, and the associated communication become more expensive, even though the local subproblems~\eqref{eq::problem::decoupled::barrier} themselves become smaller.
\end{remark}
From a practical perspective, local efficiency is mainly governed by two partitioning criteria:
\begin{itemize}[leftmargin=15pt]
\item {\bf Balanced Regions:} decoupled NLPs~\eqref{eq::problem::decoupled::barrier} should be comparable in size and computational effort to reduce idle time under synchronization.
\item {\bf Small Interfaces:} the number of coupling variables should be small, i.e., small $\xi_\ell$ and $\bar{\xi}$, which reduces cost for derivative condensation~\eqref{eq::condensing} and keeps the coordination system~\eqref{eq::coordination} small.
\end{itemize}
As shown in Section~\ref{sec::test::partition}, condensation and coordination can dominate runtime at large scale. Therefore, reducing interface size is often more important than further shrinking the local NLPs.}

\change{Motivated by these criteria, we adopt \acrshort{kaffpa} from the \acrfull{kahip} package~\cite{sanders2011engineering}. \acrshort{kaffpa} requires only the network graph $(\mathcal{N},\mathcal{L})$ and directly targets balanced partitions with a small cut, i.e., few interconnecting lines, which aligns with the requirements above. Prior ALADIN-based OPF studies~\cite{murray2020grid,murray2021optimal} also report that \acrshort{kaffpa} can outperform spectral partitioning on small- to medium-scale benchmarks. For details on \acrshort{kaffpa}, we refer to~\cite{sanders2011engineering,sandersschulz2013}.}

%common graph partitioning problems in graph theory~\cite{bulucc2016recent}, as well as 

\subsection{Distributed Automatic Differentiation}

To run optimization algorithms in parallel, some earlier works~\cite{migdalas2013parallel,schnabel1995view} indicate that running an optimization algorithm fully in parallel generally requires parallelizing the function evaluations, including evaluating derivatives in parallel. The challenge is to evaluate the derivatives while minimizing the communication between the different processes. This has led to the development of different prototypes for the Message Passing Interface (MPI) based parallel modelers~\cite{colombo2009structure,watson2012pysp,rodriguez2023scalable}. Other research adopted approaches mainly from machine learning, using fast \acrfull{ad} libraries to efficiently generate derivatives on hardware accelerators such as Graphics Processing Units (GPUs)~\cite{bradbury2018jax,paszke2019pytorch,shin2024accelerating,pacaud2024parallel}.

Unlike state-of-the-art parallel optimization methods, the proposed distributed algorithm for solving NLPs, e.g. AC \acrshort{opf}s, partitions the problem at the network level. Therefore, neither the decoupled \acrshort{nlp}s~\eqref{eq::problem::decoupled::barrier} nor the condensing steps~\eqref{eq::condensing} require global information from neighboring regions, allowing each local agent $\ell \in \mathcal{R}$ to evaluate the objective function $f_\ell$ and constraints $c_\ell$ independently in parallel. This reduces the complexity of communication and coordination using \acrshort{ad}, making the parallel execution of the proposed distributed algorithm more straightforward.

\subsection{Resiliency Against Single-Point Failures}
The proposed distributed algorithm offers resiliency against single-point failures. As shown in the sequence diagram in Fig.~\ref{fig::distIP::SeqFig}, each local agent operates independently during the decoupled step for solving \acrshort{nlp} subproblems and condensing sensitivities. If a single local agent fails, the remaining agents can continue operating, and the centralized coordinator can proceed with a reduced dataset.

Additionally, if the centralized coordinator fails, one local agent can take over the coordination tasks. This would not significantly reduce computation efficiency since local agents are idle during coordination. Moreover, this does not compromise data privacy since both the dual Hessian $W_\ell$ and the dual gradient $g_\ell$ are condensed and data-preserving.%, as noted in Remark~\ref{rmk::privacy}.

Therefore, the proposed distributed algorithm ensures that failures do not lead to catastrophic breakdowns but instead allow for a degraded yet functional solution. By maintaining a distributed operational framework and relying on local computations, the system mitigates the risk associated with any single point of failure, thereby enhancing overall stability and reliability.

%\section{Implementation}
\section{Numerical Test}\label{sec::distIP::test}
This section demonstrates the performance of Algorithm~\ref{alg::distIP} in solving large-scale AC \acrfull{opf} problems under different operating scenarios in a distributed computing environment. We assess the impact of coupling density and validate that this approach outperforms the state-of-the-art centralized nonlinear solver \ipopt~\cite{wachter2006ipopt}.
\subsection{Configuration and Setup}\label{sec::setup}
%\subsubsection{Configuration}
The framework is built on \matlab-R2023b. The test cases are the largest test cases from the PGLib-\acrshort{opf} benchmark\footnote{The PGLib-\acrshort{opf} is built for benchmarking AC \acrshort{opf} algorithms under IEEE PES Task Force. The benchmark is available on GitHub:~\url{https://github.com/power-grid-lib/pglib-opf} and the baseline results for v23.07 can be found here:~\url{https://github.com/power-grid-lib/pglib-opf/blob/master/BASELINE.md}}~\cite{pglibopf2019power} with version 23.07 and the large-scale test cases from~\cite{kardovs2022beltistos}. The grid is partitioned into a different number of regions by using the \acrshort{kaffpa} algorithm from the \acrshort{kahip} project~\cite{sanders2011engineering}. The case studies are carried out on a small workstation with two \texttt{AMD\textsuperscript{\textregistered} Epyc 7402} 24-core processors, i.e., 48 physical cores in total, and 128 \textsc{GB} installed \textsc{ram}. 

The \casadi toolbox~\cite{andersson2019casadi} is used in \matlab. We use \mab \cite{duff2004ma57} as the sparse linear solver~\footnote{\change{This choice is based on an empirical screening of HSL solvers on the same workstation and software stack: We tested \maa, \mab, \mac, \mad, and \mae, and \mab yielded the best performance for the benchmark cases considered in this paper.}}. The centralized reference solutions are obtained by solving the AC \acrshort{opf} problems with \ipopt~\cite{wachter2006ipopt}. \change{All AC \acrshort{opf} instances in this section are formulated and solved in rectangular-power coordinates, as in Problem~\ref{eq::opf::rec}. Accordingly, all reported runtimes, iteration counts, and performance comparisons refer to the rectangular-power formulation.}

%\subsubsection{Distributed Enviorment}
To evaluate the performance of the proposed algorithm with minimal communication effort considered, we utilize the \acrfull{spmd} paradigm from the \matlab parallel computing toolbox. This approach facilitates distributed computation by dividing work and data among agents without shared memory. Communication between agents is managed using explicit MPI-style commands, taking into account the potential delays and synchronization requirements. In our setup, one worker functions as the coordinator while the others serve as local agents.
\begin{table*}[htbp!]
\caption{Comparing Different Region Numbers}\label{TB::Nregion}%\vspace{-6pt}
\centering
\scriptsize
%\footnotesize
\sisetup{separate-uncertainty, table-format=1.4(5), detect-all}
\setlength{\tabcolsep}{5pt}
\noindent    
\renewcommand{\arraystretch}{1.1}
%\renewcommand{\tabcolsep}{3pt}
%\begin{tabular}{l ccc|cccc|c|c c| c c c c c c c c  }\toprule
\begin{tabular}{c cccc ccrrrrrrrrrrrrrrrrrrrrr}\toprule
\multirow{2}{*}{Case}& \multirow{2}{*}{$N^\text{bus}$} & \multirow{2}{*}{$N^\text{gen}$} & \multirow{2}{*}{$N^\text{line}$} & \multirow{2}{*}{$\zeta$} & \multicolumn{5}{l}{\bf Partition Configuration} & \multirow{2}{*}{ Iter}  &\multirow{2}{*}{Init.}&\multicolumn{6}{l}{{\bf Convergence Performance} (Wall Time [s])}  \\\cmidrule(lr){6-10} \cmidrule(lr){13-18}
&&&&& $\abs{\mathcal{R}}$& $\bar{\xi}$ & $N^\text{conn}$ &$N^\lambda$&  $N^x_{\ell,\max}$ &  & & dec. & cond. & coord. & rec. & syn.  & \it total\\\toprule
%\cmidrule(lr){5-8} %\cmidrule(lr){4-5} \cmidrule(l){6-7} 
%\multirow{10}{*}{case78484} & \multirow{10}{*}{78484} & \multirow{10}{*}{6873} & \multirow{10}{*}{126146} & 10 & 395 & 1432 & 18062 & 80&48.40&231.75&17.20&188.39&9.27&11.96&4.93\\
case78484 & 78.8k & 6.8k & 126.1k &1.607& 10&1.65\% & 395 & 1432 & 18062 & 80&48.40&17.20&188.39&9.27&11.96&4.93&\it231.75\\
&&&&&15&2.45\%&587&2110&12070&78&31.64&9.86&99.90&11.36&7.46&6.06&\it134.64\\
&&&&&20&3.28\%&777&2852&9184&90&24.73&10.65&80.50&22.47&7.75&9.54&\it130.90\\
&&&&&25&4.11\%&990&3600&7530&76&18.70&7.20&81.11&24.88&6.98&11.70&\it131.88\\
&&&&&30&4.17\%&1002&3642&6146&87&15.99&15.54&66.08&23.45&6.17&9.92&\it121.16\\
&&&&&35&5.18\%&1241&4548&5334&76&12.13&6.44&53.23&28.80&5.58&10.75&\it104.80\\
&&&&&40&5.41\%&1299&4756&4734&75&12.70&6.29&37.07&26.20&3.60&10.75&\textbf{\emph{83.90}}\\
&&&&&47&6.37\%&1532&5632&4092&76&11.91&5.73&32.77&32.94&3.35&10.93&\it85.73\\
&&&&&50&6.73\%&1624&5950&3878&79&10.69&8.60&40.06&38.16&4.90&12.25&\it103.97\\
&&&&&55&7.04\%&1690&6232&3614&79&12.12&10.53&59.62&37.92&7.11&13.94&\it129.12\\    \addlinespace    
case193k&192.7k&24.6k&228.6k&1.186&10&0.10\%&55&220&44610&48&49.22&24.05&25.10&0.19&0.42&0.63&\it50.40\\ 
&&&&&15&0.22\%&128&492&29972&46&34.08&13.32&23.60&0.27&0.58&0.58&\it38.34\\ 
&&&&&20&0.32\%&179&702&22514&46&26.72&10.45&23.02&0.56&0.56&0.81&\it35.40\\ 
&&&&&25&0.46\%&257&1004&17986&34&22.14&7.82&14.07&0.37&0.42&0.67&\textbf{\emph{23.35}}\\ 
&&&&&30&0.47\%&272&1050&15072&35&18.72&7.25&15.13&0.83&0.45&8.80&\it32.45\\ 
&&&&&35&0.63\%&351&1368&12830&44&16.85&16.27&14.00&0.68&0.64&2.58&\it34.17\\ 
&&&&&40&0.74\%&414&1612&11424&49&15.28&11.86&21.81&0.89&1.24&6.28&\it42.08\\ 
% &&&&45&553&2146&10084&48&15.24&61.01&15.61&30.78&1.69&1.75&11.18\\ 
&&&&&47&0.97\%&549&2138&9656&50&15.66&20.21&34.31&1.54&2.07&10.46&\it68.59\\ 
%&&&&48&484&1896&9380&48&18.55&33.80&13.35&17.20&1.11&0.95&1.19\\ 
&&&&&50&1.03\%&575&2252&9086&48&16.05&22.13&30.38&1.72&1.71&1.71&\it57.66\\ 
&&&&&55&1.03\%&585&2278&8350&47&19.01&19.02&21.44&1.81&1.09&6.51&\it49.87\\ 
\toprule
%$\abs{\mathcal{R}}$ &\\% & \multirow{2}{*}{Iter} & \multicolumn{4}{c}{wall time}\\
% &&& $N^\text{conn}$ &$N^\text{dual}$&  $N^x_\ell$ & &local & qp & comm. & total \\\toprule
\end{tabular}%\vspace{-20pt}
\end{table*}

\begin{figure}[htbp!]
\centering
\includegraphics[width=\linewidth]{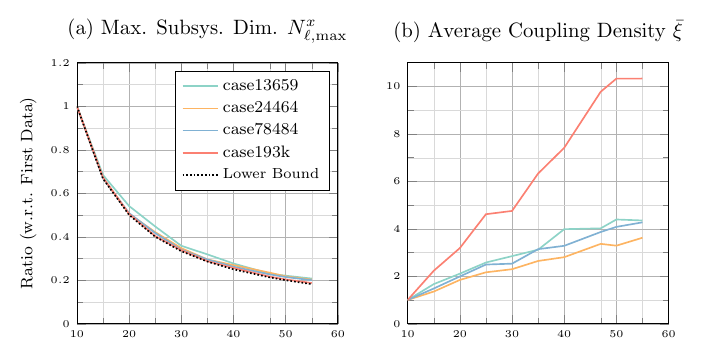}
\caption{Comparison of Network Decomposition on different power systems, i.e., case13659 from~\acrfull{pegase}~\cite{josz2016PEGASE}, case24464 from ARPA-E grid optimization competition~\cite{aravena2023GO}, case78484 from the US Eastern Interconnection states~\cite{snodgrass2021epigrids} and case193k from~\cite{kardovs2022beltistos}.}\label{fig::comparison::network::config} %\vspace{-6pt}
\end{figure}

\subsection{Impact of Network Decomposition}\label{sec::test::partition}

We analyze how the network decomposition affects performance by using three large power system test cases from the PGLib-\acrshort{opf} benchmark~\cite{pglibopf2019power}, i.e., case13659, case24464 and case7848, together with one additional large-scale case from~\cite{kardovs2022beltistos}, i.e., case193k. Table~\ref{TB::Nregion} shows detailed results for case78484 and case193k. In general, changes in the number of partitions have a larger effect on overall computing time rather than on the number of iterations required for convergence.

As illustrated in Fig.~\ref{fig::comparison::network::config}, for a given network, when we increase the number of partitions $\abs{\mathcal{R}}$, each local subproblem becomes smaller, but the average coupling density $\bar \xi$ increases. In all four cases, the maximum subproblem size $N^x_{\ell,\max}$ is close to its theoretical lower bound, indicating that the \acrshort{kaffpa} balances the subsystems effectively.
\begin{figure}[htbp!]
\centering%\vspace{-8pt}
\includegraphics[width=\linewidth]{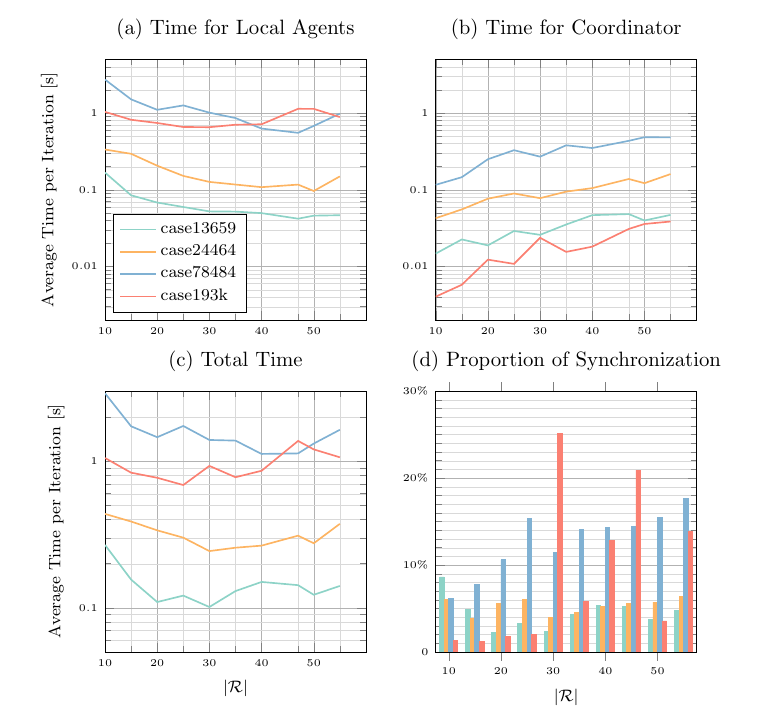}
\caption{Comparison of network decomposition on performance of the proposed Algorithm~\ref{alg::distIP} on large-scale benchmarks.}
\label{fig::comparison::cross::cases} %\vspace{-6pt}
\end{figure}

A key difference is how the average coupling density $\bar\xi$ changes. In case193k, $\bar\xi$ grows faster than in the other three networks. This may be due to the network’s topology: unlike the three PGLib-\acrshort{opf} cases with network density $\zeta\approx 1.5$, case193k has a lower network density $(\zeta=1.186)$. When partitioned into $10$ regions, case193k shows an order-of-magnitude lower connectivity than case7848, experiencing faster increases in coupling density $\bar\xi$ than dense networks. 

Fig.~\ref{fig::comparison::cross::cases} and Fig.~\ref{fig::comparison::overall} show how the average wall time per iteration is divided among local agents, the coordinator, and synchronization tasks, including communication overhead. Note that the sudden but simultaneous increases in coordinator (Fig.~\ref{fig::comparison::cross::cases}(b)) and synchronization overhead (Fig.~\ref{fig::comparison::cross::cases}(d)) stem from the additional inertia correction (Algorithm~\ref{alg::distributed::correction}), which dampens the total wall time. Except for that inertia correction, when the partition number is less than or equal to $47$---corresponding to one local agent per core---a higher number of partitions distributes workload to more local agents and shortens their individual computation times.
\begin{figure}[htbp!]
\centering
\includegraphics[width=\linewidth]{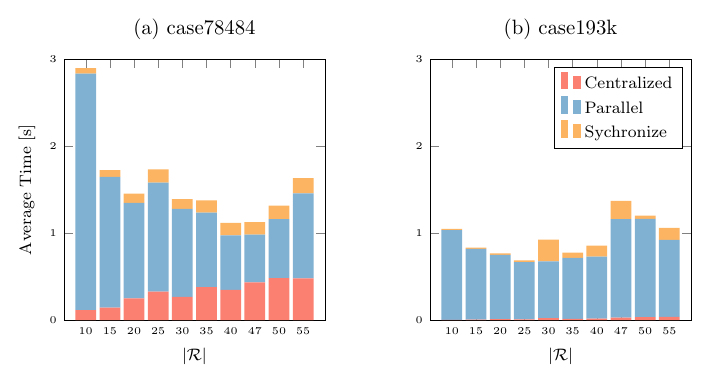}
\caption{Comparison of average wall times per iteration with different partition numbers for case78484 and case193k. Note that the parallel step encompasses the decoupled NLPs, condensing, and recovery steps.}
\label{fig::comparison::overall}%\vspace{-6pt}
\end{figure}
However, having more partitions also makes it more expensive to condense the derivatives~\eqref{eq::condensing} and enlarge the coordination problems~\eqref{eq::coordination}, leading to higher total wall times.
\begin{remark}
For a given network, increasing the partition number $\abs{\mathcal{R}}$ reduces the size of each subproblem and thus lowers each local agent’s computation effort. However, it also raises the coupling density $\xi_\ell$ in each subproblem, making the local derivative condensing---especially computing $\bar{H}_\ell^{-1}\bar{A}^\top_\ell$ in~\eqref{eq::condensing::dualHessian}---more expensive, and it also increases the size of the coupled \acrshort{qp} subproblems \eqref{eq::coordination} at the coordinator. Hence, when hardware resources are not limited, one must balance these trade-offs to find an optimal number of partitions.
\end{remark}
The trade-off is also observed in Fig.~\ref{fig::comparison::cross::cases}: for the sparser network (case193k), where $25$ partitions yield the shortest total time; beyond that, the total time grows. Although theoretically, partitioning the network can reduce certain computational burdens, real-world performance depends on how local tasks, network density, and coordination overhead interact. More research and testing are needed to determine the best partitioning strategy in practice.

%In general, while there may be a theoretical reduction in the complexity of condensing derivatives, the practical effects are uncertain, underscoring the need for further research and experiments to clarify the relationship between partition number and algorithm performance.

%results in fewer connecting lines and smaller coordination problem dimensions, thus reducing coordinator wall time, as highlighted in Fig.~\ref{fig::comparison::overall}.

%\TODO: add $\frac{N^\text{line}}{N^\text{bus}}$ to table IV

%Given that parallel steps predominantly contribute to total wall time, we delve deeper into their impact, as illustrated in Fig.~\ref{fig::comparison::local}. Notably, the condensing step~\eqref{eq::condensing}---incorporating matrix-matrix products, for instance, the decoupled dual Hessian~\eqref{eq::condensing::dualHessian}---demands more time than other parallel steps. Although increasing partition numbers reduce the size of local problems and the dimensions of the extended Hessian $\bar{H}_\ell$, it simultaneously raises the dimension as well as the density of the extended consensus matrix $\bar{A}_\ell$. Consequently,

%\begin{figure}[htbp!]
%\centering
%\includegraphics[width=0.45\textwidth]{fig/par_cmp_local.pdf}
%\caption{Average wall time for parallel steps}\label{fig::comparison::local} \vspace{-3pt}
%\end{figure}
\begin{table*}[htbp!]
\caption{Benchmark}\label{TB::numerical-result}%\vspace{-6pt}
\centering
%\scriptsize
\scriptsize
%\small
\renewcommand{\arraystretch}{1.1}
%\rowcolors{1}{}{lightgray}
%\rowcolors{1}{}{white}
\sisetup{separate-uncertainty, table-format=1.4(5), detect-all}
\setlength{\tabcolsep}{3.5pt}
\noindent    
\begin{tabular}{lrrccrrrrrrrrrrrrrrrrrr}
\toprule
\multirow{3}{*}{Case} & \multirow{3}{*}{$N^x$} & \multirow{3}{*}{$N^c$}&\multirow{3}{*}{$\zeta$}& \multirow{3}{*}{$\bar{\xi}$} & \multirow{3}{*}{Mode} &  \multicolumn{8}{l}{{\bf Convergence Performance} (Wall Time [s])} & \multicolumn{4}{l}{\bf Solution Quality}\\
&&&&&&\multicolumn{4}{l}{\baladin}  & \multicolumn{4}{l}{\ipopt} &  \multicolumn{2}{l}{\baladin}  & \multicolumn{2}{l}{\ipopt} \\
&&&&&& Iter & local & coord. & \it total & Iter & deriv. & lin. & \it total & Objective & Violation & Objective & Violation\\ \hline  
case9241&21.4k&75.9k&1.7367&15.91\%& std.&55&2.92&1.55&\textbf{\emph{6.22}}&62&4.28&4.91&\it10.68&6243150&1.5E-08&6243090&3.9E-07\\
&&&&& api.&49&1.61&1.77&\textbf{\emph{ 5.09}}&101&7.27&9.97&\it19.11&7011200&2.7E-04&7011144&3.1E-07\\
&&&&& sad.&63&2.32&2.45&\textbf{\emph{ 7.07}}&75&5.51&6.83&\it13.94&6318469&2.9E-06&6318469&3.9E-07\\     
\addlinespace    
case9591&19.9k&76.5k&1.6594&26.01\%& std.&46&1.71&4.54&\textbf{\emph{8.36}}&52&3.60&10.48&\it15.76&1061684&3.1E-06&1061684&9.7E-08\\
&&&&& api.&56&2.19&5.92&\textbf{\emph{11.54}}&103&7.14&20.74&\it29.96&1570274&1.4E-06&1570264&2.2E-07\\
&&&&& sad.&57&2.18&4.90&\textbf{\emph{9.64}}&81&5.60&15.78&\it23.25&1167400&9.2E-06&1167401&9.9E-08\\
\addlinespace    
case10000&24k&69.6k&1.3193&16.10\%& std.&59&1.91&2.66&\textbf{\emph{6.08}}&78&4.62&6.65&\it12.69&1354031&8.7E-09&1354031&5.1E-07\\
&&&&& api.&61&1.88&2.79&\textbf{\emph{6.23}}&85&5.04&7.21&\it13.71&2678660&4.5E-07&2678659&1.1E-07\\
&&&&& sad.&62&1.81&2.86&\textbf{\emph{6.17}}&93&5.57&9.39&\it16.44&1490210&1.8E-07&1490210&4.8E-07\\                         
\addlinespace    
case10192&21.8k&81.6k&1.6695&23.68\%& std.&70&3.53&6.56&\textbf{\emph{13.37}}&50&4.05&8.84&\it14.51&1686923&4.7E-06&1686923&1.3E-07\\
&&&&& api.&57&2.36&5.40&\textbf{\emph{10.62}}&74&5.32&12.56&\it19.61&1977706&1.8E-05&1977686&2.5E-07\\
&&&&& sad.&57&2.44&5.12&\textbf{\emph{10.09}}&66&5.28&11.57&\it18.63&1720194&3.9E-05&1720194&1.3E-07\\        
\addlinespace    
case10480&22.5k&87.1k&1.7709&26.33\%  & std.&67&3.87&8.22&\textbf{\emph{16.21}}&63&5.24&14.77&\it22.09&2314648&2.6E-07&2375951&1.1E-07\\
 &&&&& api.&65&3.58&9.38&\textbf{\emph{17.41}}&66&5.17&15.46&\it22.66&2863484&1.9E-07&2924781&3.2E-07\\
 &&&&& sad.&66&3.90&8.50&\textbf{\emph{16.73}}&64&5.25&15.05&\it22.35&2314712&7.0E-07&2375951&1.1E-07\\
\addlinespace    
case13659&35.5k&102.4k&1.4984&9.65\% & std.&57&2.50&2.65&\textbf{\emph{8.65}}&180&15.96&26.80&\it45.94&8948202&7.5E-05&8948049&1.9E-07\\
&&&&& api.&72&3.99&3.12&\textbf{\emph{10.03}}&94&8.34&12.97&\it23.57&9385712&4.4E-04&9385711&2.5E-07\\
&&&&& sad.&62&2.74&2.49&\textbf{\emph{7.88}}&302&26.81&118.70&\it150.55&9042199&1.2E-05&9042198&3.2E-07\\
\addlinespace    
case19402&40.7k&162.3k&1.7887&19.48\%& std.&43&4.67&9.06&\textbf{\emph{17.66}}&69&10.28&33.83&\it48.14&1977815&1.6E-06&1977815&1.2E-07\\
&&&&& api.&64&6.73&15.62&\textbf{\emph{28.72}}&69&10.32&34.53&\it48.86&2583662&1.2E-07&2583663&4.9E-07\\
&&&&& sad.&56&5.87&11.31&\textbf{\emph{21.93}}&78&11.73&37.99&\it53.92&1983808&6.7E-06&1983809&1.2E-07\\
\addlinespace    
case20758&45.9k&162.3k&1.6063&13.35\%& std.&73&6.99&10.62&\it 24.79&43&6.17&12.55&\textbf{\emph{21.69}}&2618662&3.1E-05&2618636&1.4E-07\\
&&&&& api.&63&5.45&7.65&\textbf{\emph{17.48}}&69&9.84&18.90&\it32.15&3126508&5.8E-05&3126508&1.6E-07\\
&&&&& sad.&70&5.91&7.44&\textbf{\emph{18.20}}&55&7.83&15.50&\it26.48&2638220&4.1E-06&2638220&1.4E-07\\
\addlinespace    
case24464&52.1k&186.7&1.5458&11.97\%& std.&53&5.60&4.97&\textbf{\emph{13.05}}&57&9.48&21.04&\it34.17&2629531&2.1E-05&2629531&6.9E-08\\
&&&&& api.&57&7.86&5.66&\textbf{\emph{17.04}}&75&13.53&29.24&\it46.90&2684051&2.8E-05&2683962&3.2E-07\\
&&&&& sad.&64&6.39&7.22&\textbf{\emph{18.60}}&68&11.40&25.56&\it40.83&2653958&9.9E-06&2653958&7.1E-08\\
\addlinespace    
case30000&67.1k&196.2k&1.1798&8.36\%& std.&103&10.95&8.68&\textbf{\emph{27.05}}&128&20.85&37.79&\it63.38&1142458&1.6E-04&1142331&4.6E-07\\
&&&&& api.&100&11.91&10.13&\textbf{\emph{30.80}}&147&23.67&45.18&\it73.96&1778059&4.5E-04&1777931&1.8E-07\\
&&&&& sad.&212&21.97&29.77&\textbf{\emph{91.59}}&221&36.44&83.31&\it126.23&1317386&1.3E-04&1309979&2.0E-07\\
\addlinespace    
case78484&170.5k&613.5k&1.6057&6.37\%& std.&75&47.63&22.74&\textbf{\emph{80.35}}&95&52.10&131.07&\it199.05&15316174&2.1E-05&15315886&1.3E-07\\
&&&&& api.&74&47.21&23.22&\textbf{\emph{83.01}}&217&109.66&1688.24&\it1808.58&16140687&3.3E-07&19379770&1.3E+01\\
&&&&& sad.&77&49.79&24.17&\textbf{\emph{83.92}}&96&53.68&132.43&\it201.55&15316174&1.7E-06&15315886&1.3E-07\\
\addlinespace    
%\midrule
case21k&47.6k&113.2k&1.1858&8.41\%&std.&29&1.70&1.00&\textbf{\emph{3.30}}&63&5.90&8.22&\it16.08&2592246&8.4E-06&2592098&1.2E-07\\
\addlinespace    
case42k&95.1k&226.5k&1.1858&4.94\%&std.&42&5.14&1.39&\textbf{\emph{7.80}}&66&12.55&17.53&\it34.08&2592459&2.2E-06&2592458&5.8E-07\\
\addlinespace    
case99k&224.2k&522.9k&1.1858&2.11\%&std.&45&15.18&0.60&\textbf{\emph{16.54}}&67&30.06&46.45&\it86.50&2594077&2.2E-06&2594077&5.6E-08\\
\addlinespace    
case193k&434.8k&1035.5k&1.1857&0.97\%&std.&46&31.31&0.69&\textbf{\emph{37.35}}&71&63.30&93.94&\it178.33&2595600&8.0E-07&2595599&4.1E-08\\
\bottomrule
\end{tabular}\vspace{3pt}
{   \parbox{6.8 in}{There are three configurations for test cases from PGLib-\acrshort{opf}: Standard (STD), representing nominal operating conditions; Active Power Increase (API), simulating heavily loaded scenarios; and Small Angle Difference (SAD), enforcing strict bounds on voltage phase angle difference.}\par}%\vspace{-10pt}
\end{table*}
\begin{table*}[htbp!]
\caption{Theoretical \& Practical Iterative Communication Effort}\label{TB::communication}%%\vspace{-6pt}
\centering
%\scriptsize
\footnotesize
\sisetup{separate-uncertainty, table-format=1.4(5), detect-all}
\setlength{\tabcolsep}{5pt}
\noindent    
\renewcommand{\arraystretch}{1.4}
%\renewcommand{\tabcolsep}{3pt}
%\begin{tabular}{l ccc|cccc|c|c c| c c c c c c c c  }\toprule
\begin{tabular}{lK{1.5cm}K{1.5cm}K{3.5cm}K{4.5cm}}\toprule
&$\bar{\xi}$ & \acrshort{admm} & \acrshort{aladin} & \acrshort{baladin}\\\toprule
Forward  &$\begin{aligned}\sum_{\ell\in\mathcal{R}}\frac{\xi_\ell}{N^\text{reg}}\end{aligned}$ & $\begin{aligned}\sum_{\ell\in\mathcal{R}}2 N^x_\ell\end{aligned}$ & $\begin{aligned}\sum_{\ell\in\mathcal{R}}\, (N^x_\ell)^2 + \frac{3+2\xi_\ell}{2} N^x_\ell\end{aligned}$ & $\begin{aligned}\sum_{\ell\in\mathcal{R}}\, \frac{\xi^2_\ell}{2} (N^x_\ell)^2 + \frac{5\xi_\ell}{2} N^x_\ell\end{aligned}$\\ 
Backward  &$\begin{aligned}\sum_{\ell\in\mathcal{R}}\frac{\xi_\ell}{N^\text{reg}}\end{aligned}$& $\begin{aligned}\sum_{\ell\in\mathcal{R}} N^x_\ell\end{aligned}$ &$\begin{aligned}\sum_{\ell\in\mathcal{R}}\, (1+\xi_\ell)N^x_\ell \end{aligned}$ & $\begin{aligned}2N^\text{reg}+\sum_{\ell\in\mathcal{R}}\, \xi_\ell N^x_\ell \end{aligned}$\\
Total &$\begin{aligned}\sum_{\ell\in\mathcal{R}}\frac{\xi_\ell}{N^\text{reg}}\end{aligned}$& $\begin{aligned}\sum_{\ell\in\mathcal{R}}3 N^x_\ell\end{aligned}$ & $\begin{aligned}\sum_{\ell\in\mathcal{R}}\, (N^x_\ell)^2 + \frac{5+4\xi_\ell}{2} N^x_\ell\end{aligned}$ & $\begin{aligned}2N^\text{reg}+\sum_{\ell\in\mathcal{R}}\, \frac{\xi^2_\ell}{2} (N^x_\ell)^2 + \frac{7\xi_\ell}{2} N^x_\ell\end{aligned}$\\\hline 
case13659& 9.64\% &0.427 MB & 2.388 MB& 0.682 MB  \\
case24464& 11.97\%&0.635 MB& 4.276 MB& 2.104 MB \\
case78484& 6.37\%&2.016 MB& 13.731 MB& 5.791 MB  \\
case193k& 0.97\% &4.999  MB& 20.226 MB& 0.888 MB\\\bottomrule
%\multicolumn{5}{l}{Theoretical worst-case communication efforts are expressed in terms of floating-point numbers, while practical communication efforts for large-scale AC \acrshort{opf} benchmarks, as discussed in Section~\ref{sec::distIP::test}, are measured in megabytes, assuming the use of single-precision floating-point numbers}
%$\abs{\mathcal{R}}$ &\\% & \multirow{2}{*}{Iter} & \multicolumn{4}{c}{wall time}\\
% &&& $N^\text{conn}$ &$N^\text{dual}$&  $N^x_\ell$ & &local & qp & comm. & total \\\toprule
\end{tabular}\vspace{3pt}\\%%\vspace{-15pt}\\
{   \parbox{4.8in}{Note: Theoretical worst-case communication efforts are expressed in terms of floating-point numbers, while practical communication efforts for large-scale AC \acrshort{opf} benchmarks are measured in MegaBytes, assuming the use of single-precision floating-point}\par}
\end{table*}

\subsection{Centralized vs. Distributed}\label{sec::comparison}

This section evaluates the performance of the proposed distributed approach against the state-of-the-art centralized nonlinear solver, \ipopt~\cite{wachter2006ipopt}. \change{Both centralized and distributed approaches use \mab~\cite{duff2004ma57} as the linear solver and \casadi~\cite{andersson2019casadi} for \acrfull{ad} to ensure a fair comparison, and both run on the same workstation. The centralized baseline benefits from intra-node parallelism in derivative evaluations and in the linear algebra pipeline of the KKT factorization/solve, whereas the distributed approach gains parallelism primarily through graph decomposition and distributed coordination (BALADIN), with one \matlab worker (one core) assigned to each region.} 

The evaluation involves the largest test cases from the PGLib-\acrshort{opf} benchmark\footnote{\change{We exclude the RTE cases due to a potential topology inconsistency that can cause solver-dependent convergence issues and confound runtime comparisons; see \url{https://github.com/power-grid-lib/pglib-opf/issues/48}.}}~\cite{pglibopf2019power}, each with three operation modes: \acrfull{std}, \acrfull{api}, and \acrfull{sad}. Additionally, the four largest test cases from recent studies~\cite{kardovs2022beltistos} are included. All these power grids are divided into 40 regions for analysis.

Detailed results in Table~\ref{TB::numerical-result} show that all the proposed distributed approaches converge to local optimizers\footnote{\change{Note that Table~\ref{TB::numerical-result} is generated from a fixed benchmark script, whereas Table~\ref{TB::Nregion} is produced by a separate script that varies $\abs{\mathcal{R}}$. }}. The optimality gaps between the centralized and distributed approaches are minimal, typically from $10^{-5}$ to $10^{-7}$. \change{We note that in a subset of cases, the reported residuals can be larger than IPOPT’s, even when the objective values are already close. This is consistent with our stopping criteria: we additionally apply a stagnation safeguard that terminates when the relative objective decrease becomes negligible, to avoid excessive iterations with little objective improvement.}  Notably, the case78484, with increased active power (API) using the centralized approach, fails to converge, underscoring a potential scalability advantage of distributed approaches.
\begin{figure}[htbp!]
\centering
\includegraphics[width=\linewidth]{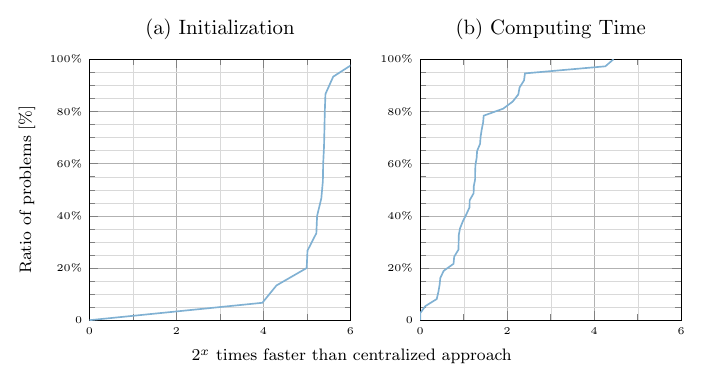}
\caption{Performance profile comparing Algorithm~\ref{alg::distIP} with IPOPT on large-scale AC OPF benchmarks (over 9,000 buses)}\label{fig::comparison::centralized} %\vspace{-8pt}
\end{figure}

Figure~\ref{fig::comparison::centralized} compares the performance efficiency across all these large-scale test cases. 
The distributed implementation of \acrshort{ad}, leveraging the \acrshort{spmd} model, significantly enhances the initialization speed. As shown in Fig.~\ref{fig::comparison::centralized}~(a), initialization is at least 16 times faster in more than $90\%$ of the cases compared to the centralized approach. Regarding algorithm efficiency, the distributed algorithm is at least twice as fast as the centralized method in over $60\%$ of the cases, with around $20\%$ of cases achieving at least four times the speed, as shown in Fig.~\ref{fig::comparison::centralized}~(b). It suggests that the larger the test case, the greater the computational benefits offered by the distributed approach, highlighting its significant advantages for handling large-scale problems.

\subsection{Impact of Network Sparsity}
\label{sec::effect::sparsity}
\change{
Power grids and other geographically grounded networks, such as transportation systems, are typically sparse: each node connects to only a few neighbors. This is reflected by the network density $\zeta$ (average number of lines per bus): for the large-scale benchmarks in Table~\ref{TB::numerical-result}, $\zeta$ ranges from 1.18 to 1.78. By contrast, a fully meshed network would require a direct connection between every pair of buses, corresponding to $\zeta=\zeta_{\max}=(N^{\mathrm{bus}}-1)/2$ (cf.~\eqref{eq::network::density}). For example, this would imply a transmission line between every city on the U.S.\ East Coast and every city on the West Coast. Such a topology is not representative of real power systems.}

\change{Since the convergence analyses in Sections~\ref{sec::global::convergence} and~\ref{sec::local::convergence} do not rely on network sparsity, the proposed distributed approach and the optimal graph-based decomposition criteria also apply to more highly meshed networks. However, as coupling becomes denser, region interfaces grow, and the coordination step becomes more expensive. The overall computing time can then be dominated by derivative condensation, coordination solves, and communication. In this regime, the distributed method may offer limited benefit and can even be slower than a centralized solver.}

\section{Iterative Communication Analysis}\label{sec::communication}
This section analyzes the communication requirements of Algorithm~\ref{alg::distIP}. Building upon the foundations in \cite{Engelmann2019}, we extend the theoretical analysis from specific AC \acrshort{opf} problems to general \acrshort{nlp} formulations. We further evaluate practical performance using the large-scale benchmarks introduced in previous sections.
\subsection{Worst-Case Communication Complexity}
To establish a baseline for comparison, we evaluate communication effort under a worst-case scenario. We assume that:
\begin{enumerate}%[label=(\alph*)]
\item The number of active constraints is equal to the number of state variables $N^x_\ell$ for each region $\ell \in \mathcal{R}$ such that \acrshort{licq} still holds,
%\item subproblems are densely coupled such that the row rank of the consensus matrix $A_\ell$ is equal to the number of state variables $N^x_\ell$,
\item All matrices are treated as dense, neglecting potential sparsity patterns,
\item The analysis considers only the full Newton step, excluding additional communication required for inertia correction (Algorithm~\ref{alg::distributed::correction}) or globalization strategies (Algorithm~\ref{alg::basicGlobal}).
\end{enumerate}
These assumptions ensure a fair comparison across distributed algorithms, independent of problem-specific features such as sparsity patterns and complexity, while the practical evaluation is based on the four large-scale AC OPF benchmarks introduced in Section~\ref{sec::distIP::test}.

For the proposed distributed Algorithm~\ref{alg::distIP}, the coordinator solves the condensed \acrshort{qp} subproblem~\eqref{eq::coordination} with $W = \sum_{\ell\in\mathcal{R}} W_\ell$ and $h = -b + \sum_{\ell\in\mathcal{R}} h_\ell$. The communication effort for this is primarily associated with transferring the dual Hessian $W_\ell$ and the dual gradient $g_\ell$. Note that the coordinator is also responsible for reducing the barrier parameter $\mu$ (Step~\ref{alg::distIP::mu}) before solving the condensed~\acrshort{qp}. Since $h_\ell$ can be viewed as linear mapping of $\mu$, i.e.,
$h_\ell = h_{\ell,0} + \mu \;h_{\ell,\mu}$, the communication effort on $h_\ell$ would be doubled. Similar to the discussion earlier in this section, only $N^\text{clp}_\ell$ elements from $h_\ell$ should be transferred to a centralized coordinator. 

As a result, the forward communication in terms of floating-point numbers includes
\begin{align*}
\sum_{\ell\in\mathcal{R}}\;\underbrace{2N^\text{cpl}_\ell}_{h_\ell}  + \underbrace{\frac{N_\ell^\text{cpl}(N_\ell^\text{cpl}+1)}{2}}_{W_\ell}.
\end{align*}

Once the condensed linear system~\eqref{eq::coordination} is solved, the proposed algorithm only requires the dual step~$\Delta \lambda$  to be communicated during the backward phase, along with additional data for synchronizing the primal-dual steplength:
\begin{align*}
\underbrace{N^\text{reg}}_{\text{Eq.~\eqref{step::distIP::coordinate}}} + \sum_{\ell\in\mathcal{R}}\; \underbrace{N_\ell^\text{cpl}}_{\Delta\lambda}.
\end{align*}
\subsection{Practical Communication on AC OPF Benchmarks}
Recall that $\xi_\ell = \frac{N^\mathrm{cpl}_\ell}{N^x_\ell}$ measures the fraction of coupling variables in region $\ell\in\mathcal{R}$, and
$\bar{\xi} = \frac{1}{N^\mathrm{reg}}\sum_{\ell\in\mathcal{R}}\xi_\ell$ is the average coupling density across regions.

In large-scale power system applications, the decomposition typically yields a moderate number of regions compared with the total number of state variables, i.e.,
\[
N^\mathrm{reg}\ll \sum_{\ell\in\mathcal{R}}N^x_\ell.
\]
This regime is favorable for the proposed method: derivative condensing aggregates local second-order information before communication, so the coordination step exchanges information that scales with the local coupling variables rather than with the full problem dimension. As a result, \acrshort{baladin} substantially reduces communication relative to standard \acrshort{aladin}, even when the system is not weakly coupled.

A purely dense-matrix communication count is convenient for analysis but can be misleading in practice, since the Hessian and Jacobian in AC \acrshort{opf} are sparse. This particularly affects standard \acrshort{aladin}, whose coordination step solves a full-dimensional coupled \acrshort{qp} in the form~\eqref{eq::coupled::barrier::newton} and therefore benefits strongly from sparsity. For a fair comparison under a realistic structure, we report empirical communication measured in single-precision floats on large-scale AC \acrshort{opf} instances, summarized in Table~\ref{TB::communication}.

\change{
Except for the final test case, the per-iteration communication grows roughly linearly with system size for \acrshort{admm} and more steeply for \acrshort{aladin}, whereas \acrshort{baladin} shows intermediate scaling. This behavior is explained by the derivative-condensing step, which reduces the amount of information exchanged per iteration compared with standard \acrshort{aladin} (Table~\ref{TB::communication}). Moreover, the communication cost of \acrshort{baladin} depends on $\bar{\xi}$: when the partitioning keeps interfaces sparse (small $\bar{\xi}$), \acrshort{baladin} can match, and in some cases outperform, \acrshort{admm} in per-iteration communication.}

\subsection{Communication-Computation Trade-offs}\label{sec::trade-offs}
\change{It is also important to distinguish per-iteration communication from total communication to reach a given accuracy. Gradient-type methods such as \acrshort{admm} often communicate less per iteration, but they typically require many more iterations to reach comparable accuracy on nonlinear nonconvex problems such as AC \acrshort{opf}~\cite{muhlpfordt2021distributed,lanza2024ADMM_ALADIN,Dai2025thesis}. Thus, smaller per-iteration messages do not necessarily translate into lower end-to-end communication. In addition, because \acrshort{admm} relies on many inexpensive iterations, it requires more frequent synchronization and can be more sensitive to latency. In contrast, \acrshort{baladin} performs more work and exchanges richer information per iteration, but typically converges in far fewer iterations, which can reduce total communication and wall-clock time when latency is non-negligible.}

\change{These results also clarify scalability limits in extreme-scale and real-time settings. For million-bus systems or tight operational time budgets, the communication-computation trade-off becomes increasingly important, and performance will depend on both the communication infrastructure and the partition quality (in particular, the sparsity of the interfaces captured by $\bar{\xi}$). Faster links and sparse interfaces favor second-order coordination methods such as \acrshort{baladin}, whereas limited bandwidth or high latency may favor simpler schemes such as \acrshort{admm}. Overall, the communication effort and total runtime reported in Section~\ref{sec::distIP::test} depend not only on system size, but also on partitioning choices and the resulting coupling between subproblems.}

\section{Conclusion \& Outlook}    \label{sec::conclusion}

This paper presents a distributed solution for large-scale nonconvex AC \acrshort{opf} with convergence guarantees and fast local convergence. Scalability is achieved by decomposing the network into regional subproblems and coordinating them through a Schur-complement-based condensation step, which enables parallel computation while limiting the information exchanged between regions and thereby supporting privacy. Since the underlying formulation consists of a separable objective with local nonlinear constraints and affine coupling, the framework naturally extends to other distributed nonlinear optimization and control problems. Extensive simulations on the largest available test cases and across multiple operating scenarios quantify the impact of coupling density and show that the proposed method can outperform the state-of-the-art centralized nonlinear solver \ipopt~\cite{wachter2006ipopt} on moderate hardware.

\change{
The current study focuses on a synchronous execution model to enable a clean analysis of convergence and communication-computation trade-offs. Several practical extensions remain open and are important for real deployments. First, distributed globalization routines, e.g., line search, remain challenging, as also noted in~\cite{dinh2013dual,Boris2016aladin}. In particular, Step~\ref{alg::global::dual} can be computationally demanding and communication intensive, and improving this step is a key direction for future work. Second, residual stagnation can be exacerbated by ill-conditioning in condensed systems. Related effects of condensation steps on numerical conditioning and final solution accuracy have been discussed in~\cite{gill1991solving,shin2024accelerating}. Improving robustness and conditioning is therefore another important direction. Third, communication overhead can dominate at extreme scale, motivating further reduction of exchanged information and tighter integration with high-performance computing (HPC) communication primitives. Third, real systems may face asynchronous updates, node dropouts, and cyber-physical constraints such as variable latency, packet loss, and security requirements. Promising directions include asynchronous variants with bounded delays, fault-tolerant coordination strategies, e.g., temporarily freezing missing regions or re-partitioning when a node fails, and secure aggregation of the condensed quantities exchanged with the coordinator. Finally, while our numerical study in this paper is centered on large-scale AC \acrshort{opf} as a representative graph-based nonconvex benchmark, extending to other applications in different domains is part of ongoing and future work.
}

%\change{The current study focuses on a synchronous execution model to enable a clean analysis of convergence and communication-computation trade-offs. Several practical extensions remain open and are important for real deployments. First, distributed globalization routines (e.g., line search) are still challenging, as also noted in~\cite{dinh2013dual,Boris2016aladin}. Second, communication overhead can dominate at extreme scale, motivating further reduction of exchanged information and tighter integration with high-performance computing (HPC) communication primitives. Third, real systems may face asynchronous updates, node dropouts, and cyber-physical constraints such as variable latency, packet loss, and security requirements. Promising directions include asynchronous variants with bounded delays, fault-tolerant coordination strategies, e.g., temporarily freezing missing regions or re-partitioning when a node fails, and secure/robust aggregation of the condensed quantities exchanged with the coordinator.} 

%dditionally, we plan to release an open-source \matlab toolbox for the proposed approach, making it accessible for rapid prototyping and further research.

%\appendices\
%\section{Proof I}
%\section{Distributed inertia Correction}

\appendices
\section{Globalization Strategy}
\begin{algorithm}[htbp!]
\caption{Globalization Strategy~\cite[Alg.~3]{Boris2016aladin} for the barrier problems~\eqref{eq::DistProblem::barrier}}\label{alg::aladin::standard::global}
\small
\textbf{Initialization:} set $\alpha_1= \alpha_2 =\alpha_3 =1$.
\begin{enumerate}[label={\bfseries\alph*)}]
\item If the iterates from Step~9 in Algorithm~\ref{alg::distIP} satisfies \label{alg::global::full}
\begin{equation}
\label{eq::Meritdescent}
\hspace{-5mm}\Phi(z)-\Phi(x^+)\geq \eta \left(
\sum_{i=1}^{N}\frac{\rho}{2}\left\|x_\ell-z_\ell\right\|_{\Sigma_\ell}^2 + \bar{\lambda} \left\|\sum_{i=1}^{N}A_\ell x_\ell - b\right\|_1
\right)
\end{equation}
with $x^+=x+\Delta x$, then return $\alpha_1= \alpha_2 =\alpha_3 =1$.

\item If the full step is not accepted, set $x^+ = x$ and $\lambda^+ = \lambda$. If inequality~\eqref{eq::Meritdescent} holds, return $\alpha_1 = 1$ and $\alpha_2=\alpha_3 = 0$.\label{alg::global::half}

\item If both \ref{alg::global::full} and \ref{alg::global::half} failed, set $x^+ = z$ and choose $\alpha_3 \in (0,1]$ by solving \label{alg::global::dual}
\begin{equation}\label{eq::alpha3}
\alpha_3^* = \text{arg} \max_{\alpha_3 \in(0,1]} V_\rho(z,\lambda + \alpha_3 (\lambda^\text{QP} - \lambda ))
\end{equation} 
with the objective function defined by a parametric optimization problem 
\begin{subequations}\label{eq::Vrho}
\begin{align}
V_\rho(\bar x,\lambda) = &\min_{x}\quad \sum_{i=1}^{N}\left\{ 
f_\ell(x_\ell) + \lambda^\top A_\ell x_\ell + \frac{\rho}{2} \|x_\ell-\bar x_\ell\|_{\Sigma_\ell}^2
\right\}\notag \\&\qquad\qquad\qquad\qquad\qquad -\lambda^\top b\\
\text{s.t.}\quad & c^\text{E}_\ell(x_\ell)=0,\,c^\text{I}_\ell(x_\ell)+s_\ell=0,\,\forall\ell\in\mathcal{R}
\end{align}
\end{subequations}
return $\alpha_1 = \alpha_2 = 0 $ and $\alpha_3 = \alpha_3^*$.
\end{enumerate}
\label{alg::basicGlobal}
\end{algorithm}
\small
For the barrier problems~\eqref{eq::DistProblem::barrier}, the globalization strategy~\cite[Alg.~3]{Boris2016aladin} is outlined in Algorithm~\ref{alg::aladin::standard::global} with the merit function defined as
\begin{align*}
\Phi(x) = \sum_{\ell\in\mathcal{R}} f_\ell(x_\ell) + &\varepsilon_1 \norm{\sum_{\ell\in\mathcal{R}}A_\ell x_\ell-b}_1 \\ &+\varepsilon_2 \sum_{\ell\in\mathcal{R}}\left\{ \norm{c^\text{I}_\ell(x_\ell)+s_\ell}_1 +\norm{c^\text{E}_\ell(x_\ell)}_1\right\},
\end{align*}
where the positive penalty pfarameters $\varepsilon_1,\;\varepsilon_1$ are assumed to be sufficiently large such that $\Phi$ is an exact penalty function for the barrier problems~\eqref{eq::DistProblem::barrier}.

\section*{Acknowledgment}
\footnotesize The authors thank Frederik Zahn and François Pacaud for proofreading and discussion. 

% \section*{Acknowledgment}
% \footnotesize The authors thank Frederik Zahn and François Pacaud for proofreading and discussion. This work was supported in part by the BMBF-project ENSURE III with grant number 03SFK1F0-3, in part by the Swiss National Science Foundation (SNSF) under the NCCR Automation project, grant agreement 51NF40\_180545, and in part by the Swiss Federal Office of Energy’s “SWEET” programme, and in part by BMWK via 03EI4057A (GrECCo).

\bibliographystyle{IEEEtran}
\bibliography{root}

@ARTICLE{ZhaiJunyialadin,
author={Zhai, Junyi and Dai, Xinliang and Jiang, Yuning and Xue, Ying and Hagenmeyer, Veit and Jones, Colin and Zhang, Xiao-Ping},
journal={IEEE Transactions on Power Systems}, title={Distributed Optimal Power Flow for {VSC-MTDC} Meshed {AC}/{DC} Grids Using {ALADIN}},
year={2022},
volume={},
number={},
pages={1-1},
doi={10.1109/TPWRS.2022.3155866}}

@ARTICLE{LinChenhui2020,
author={Lin, Chenhui and Wu, Wenchuan and Shahidehpour, Mohammad},
journal={IEEE Transactions on Smart Grid}, title={Decentralized {AC} Optimal Power Flow for Integrated Transmission and Distribution Grids},
year={2020},
volume={11},
number={3},
pages={2531-2540},
doi={10.1109/TSG.2019.2957142}}

@ARTICLE{6748974,
	author={T. {Erseghe}},
	journal={IEEE Transactions on Power Systems},
	title={Distributed Optimal Power Flow Using {ADMM}},
	year={2014},
	volume={29},
	number={5},
	pages={2370-2380}
}

@ARTICLE{Guo2017,
	author={Guo, Junyao and Hug, Gabriela and Tonguz, Ozan K.},
	journal={IEEE Transactions on Power Systems}, title={A Case for Nonconvex Distributed Optimization in Large-Scale Power Systems},
	year={2017},
	volume={32},
	number={5},
	pages={3842-3851},
	doi={10.1109/TPWRS.2016.2636811}}

@article{Boris2016aladin,
	author    = {Boris Houska and
	Janick V. Frasch and
	Moritz Diehl},
	title     = {An Augmented Lagrangian Based Algorithm for Distributed NonConvex
	Optimization},
	journal   = {{SIAM} Journal on Optimization},
	volume    = {26},
	number    = {2},
	pages     = {1101--1127},
	year      = {2016},
}

@ARTICLE{Engelmann2019,
	author={{\color{black}A. {Engelmann}, Y. {Jiang}, T. {Mühlpfordt}, B. {Houska}, T. {Faulwasser}}},
	journal={{\color{black}IEEE Transactions on Power Systems}}, title={{\color{black}Toward Distributed {OPF} Using {ALADIN}}},
	year={{\color{black}2019}},
	volume={{\color{black}34}},
	number={{\color{black}1}},
	pages={{\color{black}584-594}},}

@article{muhlpfordt2021distributed,
  title={Distributed power flow and distributed optimization—Formulation, solution, and open source implementation},
  author={M{\"u}hlpfordt, Tillmann and Dai, Xinliang and Engelmann, Alexander and Hagenmeyer, Veit},
  journal={Sustainable Energy, Grids and Networks},
  volume={26},
  pages={100471},
  year={2021},
  publisher={Elsevier}
}

@inproceedings{dinh2013dual,
  title={A dual decomposition algorithm for separable nonconvex optimization using the penalty function framework},
  author={Dinh, Quoc Tran and Necoara, Ion and Diehl, Moritz},
  booktitle={52nd IEEE CDC},
  pages={2372--2377},
  year={2013},
  organization={IEEE}
}

@article{tran2013combining,
  title={Combining lagrangian decomposition and excessive gap smoothing technique for solving large-scale separable convex optimization problems},
  author={Tran Dinh, Quoc and Savorgnan, Carlo and Diehl, Moritz},
  journal={Computational Optimization and Applications},
  volume={55},
  number={1},
  pages={75--111},
  year={2013},
  publisher={Springer}
}

@article{lu2017fully,
  title={Fully decentralized optimal power flow of multi-area interconnected power systems based on distributed interior point method},
  author={Lu, Wentian and Liu, Mingbo and Lin, Shunjiang and Li, Licheng},
  journal={IEEE Transactions on Power Systems},
  volume={33},
  number={1},
  pages={901--910},
  year={2017},
  publisher={IEEE}
}

@article{andersson2019casadi,
  title={CasADi: a software framework for nonlinear optimization and optimal control},
  author={Andersson, Joel AE and Gillis, Joris and Horn, Greg and Rawlings, James B and Diehl, Moritz},
  journal={Mathematical Programming Computation},
  volume={11},
  number={1},
  pages={1--36},
  year={2019},
  publisher={Springer}
}

@article{wachter2006ipopt,
  title={On the implementation of an interior-point filter line-search algorithm for large-scale nonlinear programming},
  author={W{\"a}chter, Andreas and Biegler, Lorenz T},
  journal={Mathematical Programming},
  volume={106},
  number={1},
  pages={25--57},
  year={2006},
  publisher={Springer}
}

@book{nocedal2006numerical,
	title={Numerical optimization},
	author={Nocedal, Jorge and Wright, Stephen},
	year={2006},
	publisher={Springer Science \& Business Media}
}

@article{sun2021two,
  title={A two-level {ADMM} algorithm for {AC} {OPF} with global convergence guarantees},
  author={Sun, Kaizhao and Sun, Xu Andy},
  journal={IEEE Transactions on Power Systems},
  volume={36},
  number={6},
  pages={5271--5281},
  year={2021},
  publisher={IEEE}
}

@article{dai2022rapid,
    title = {Rapid Scalable Distributed Power Flow with Open-Source Implementation},
    journal = {IFAC-PapersOnLine},
    volume = {55},
    number = {13},
    pages = {145-150},
    year = {2022},
    note = {9th IFAC NECSYS 2022},
    issn = {2405-8963},
    author = {Xinliang Dai and Yichen Cai and Yuning Jiang and Veit Hagenmeyer},
}

@article{frank2016introduction,
  title={An introduction to optimal power flow: Theory, formulation, and examples},
  author={Frank, Stephen and Rebennack, Steffen},
  journal={IIE Transactions},
  volume={48},
  number={12},
  pages={1172--1197},
  year={2016},
  publisher={Taylor \& Francis}
}

@article{bienstock2019strong,
  title={Strong {NP}-{H}ardness of {AC} power flows feasibility},
  author={Bienstock, Daniel and Verma, Abhinav},
  journal={Operations Research Letters},
  volume={47},
  number={6},
  pages={494--501},
  year={2019},
  publisher={Elsevier}
}

@article{lehmann2015ac,
  title={{AC}-feasibility on tree networks is {NP}-{H}ard},
  author={Lehmann, Karsten and Grastien, Alban and Van Hentenryck, Pascal},
  journal={IEEE Transactions on Power Systems},
  volume={31},
  number={1},
  pages={798--801},
  year={2015},
  publisher={IEEE}
}

@article{molzahn2017surveyDistr,
  title={A survey of distributed optimization and control algorithms for electric power systems},
  author={Molzahn, Daniel K and D{\"o}rfler, Florian and Sandberg, Henrik and Low, Steven H and Chakrabarti, Sambuddha and Baldick, Ross and Lavaei, Javad},
  journal={IEEE Transactions on Smart Grid},
  volume={8},
  number={6},
  pages={2941--2962},
  year={2017},
  publisher={IEEE}
}

@article{duff2004ma57,
  title={MA57---a code for the solution of sparse symmetric definite and indefinite systems},
  author={Duff, Iain S},
  journal={ACM TOMS},
  volume={30},
  number={2},
  pages={118--144},
  year={2004},
  publisher={ACM New York, NY, USA}
}

@article{dai2023itd,
  title={Advancing distributed AC optimal power flow for integrated transmission-distribution systems},
  author={Dai, Xinliang and Zhai, Junyi and Jiang, Yuning and Guo, Yi and Jones, C. N. and Hagenmeyer, Veit},
  journal={IEEE Transactions on Network Science and Engineering},
  volume={12},
  number={1},
  pages={1210 - 1223},
  year={2025},
  publisher={IEEE}
}

@article{pglibopf2019power,
  title={The power grid library for benchmarking {AC} optimal power flow algorithms},
  author = {Babaeinejadsarookolaee, Sogol and Birchfield, Adam and et al},
  journal={arXiv preprint arXiv:1908.02788},
  year={2019}
}

@article{vanderbei1999interior,
  title={An interior-point algorithm for nonconvex nonlinear programming},
  author={Vanderbei, Robert J and Shanno, David F},
  journal={Computational Optimization and Applications},
  volume={13},
  pages={231--252},
  year={1999},
  publisher={Springer}
}

@article{engelmann2020decomposition,
  title={Decomposition of nonconvex optimization via bi-level distributed {ALADIN}},
  author={Engelmann, Alexander and Jiang, Yuning and Houska, Boris and Faulwasser, Timm},
  journal={IEEE Transactions on Control of Network Systems},
  volume={7},
  number={4},
  pages={1848--1858},
  year={2020},
  publisher={IEEE}
}

@article{bertsekas1978convergence,
  title={On the convergence properties of second-order multiplier methods},
  author={Bertsekas, DP},
  journal={J. Optim. Theory Appl.},
  volume={25},
  number={3},
  pages={443--449},
  year={1978},
  publisher={Springer}
}

@article{peng2016distributed,
  title={Distributed optimal power flow algorithm for radial networks, I: Balanced single phase case},
  author={Peng, Qiuyu and Low, Steven H},
  journal={IEEE Transactions on Smart Grid},
  volume={9},
  number={1},
  pages={111--121},
  year={2016},
  publisher={IEEE}
}

@article{mhanna2018adaptive,
  title={Adaptive {ADMM} for distributed {AC} optimal power flow},
  author={Mhanna, Sleiman and Verbi{\v{c}}, Gregor and Chapman, Archie C},
  journal={IEEE Transactions on Power Systems},
  volume={34},
  number={3},
  pages={2025--2035},
  year={2018},
  publisher={IEEE}
}

@article{guo2015intelligent,
  title={Intelligent partitioning in distributed optimization of electric power systems},
  author={Guo, Junyao and Hug, Gabriela and Tonguz, Ozan K},
  journal={IEEE Transactions on Smart Grid},
  volume={7},
  number={3},
  pages={1249--1258},
  year={2015},
  publisher={IEEE}
}

@article{sun2023two,
  title={A two-level distributed algorithm for nonconvex constrained optimization},
  author={Sun, Kaizhao and Sun, X Andy},
  journal={Computational Optimization and Applications},
  volume={84},
  number={2},
  pages={609--649},
  year={2023},
  publisher={Springer}
}

@article{engelmann2017distributed,
  title={Distributed {AC} optimal power flow using {ALADIN}},
  author={Engelmann, Alexander and M{\"u}hlpfordt, Tillmann and Jiang, Yuning and Houska, Boris and Faulwasser, Timm},
  journal={IFAC-PapersOnLine},
  volume={50},
  number={1},
  pages={5536--5541},
  year={2017},
  publisher={Elsevier}
}

@article{zheng2015fully,
  title={A fully distributed reactive power optimization and control method for active distribution networks},
  author={Zheng, Weiye and Wu, Wenchuan and Zhang, Boming and Sun, Hongbin and Liu, Yibing},
  journal={IEEE Transactions on Smart Grid},
  volume={7},
  number={2},
  pages={1021--1033},
  year={2015},
  publisher={IEEE}
}

@article{rajaei2021decentralized,
  title={Decentralized transactive energy management of multi-microgrid distribution systems based on ADMM},
  author={Rajaei, Ali and Fattaheian-Dehkordi, Sajjad and Fotuhi-Firuzabad, Mahmud and Moeini-Aghtaie, Moein},
  journal={Int. J. Electr. Power Energy Syst.},
  volume={132},
  pages={107126},
  year={2021},
  publisher={Elsevier}
}

@article{tu2020two,
  title={A two-stage decomposition approach for {AC} optimal power flow},
  author={Tu, Shenyinying and W{\"a}chter, Andreas and Wei, Ermin},
  journal={IEEE Transactions on Power Systems},
  volume={36},
  number={1},
  pages={303--312},
  year={2020},
  publisher={IEEE}
}

@article{wang2022nested,
  title={A nested decomposition method for the {AC} optimal power flow of hierarchical electrical power grids},
  author={Wang, Qi and Lin, Chenhui and Wu, Wenchuan and Wang, Bin and Wang, Guannan and Liu, Haitao and Zhang, Hongyu and Zhang, Jun},
  journal={IEEE Transactions on Power Systems},
  year={2022},
  publisher={IEEE}
}

@inproceedings{engelmann2021essentially,
  title={An essentially decentralized interior point method for control},
  author={Engelmann, Alexander and Stomberg, G{\"o}sta and Faulwasser, Timm},
  booktitle={60th IEEE Conference on Decision and Control (CDC)},
  pages={2414--2420},
  year={2021},
  organization={IEEE}
}

@inproceedings{sandersschulz2013,
             AUTHOR = {Sanders, Peter and Schulz, Christian},
             TITLE = {{Think Locally, Act Globally: Highly Balanced Graph Partitioning}},
             BOOKTITLE = {Proceedings of the 12th International Symposium on Experimental Algorithms (SEA'13)},
             SERIES = {LNCS},
             PUBLISHER = {Springer},
             YEAR = {2013},
             VOLUME = {7933},
             PAGES = {164--175}
}

@article{shin2021graph,
  title={Graph-based modeling and decomposition of energy infrastructures},
  author={Shin, Sungho and Coffrin, Carleton and Sundar, Kaarthik and Zavala, Victor M},
  journal={IFAC-PapersOnLine},
  volume={54},
  number={3},
  pages={693--698},
  year={2021},
  publisher={Elsevier}
}

@article{dai2023easimov,
  title={Ensuring data privacy in ac optimal power flow with a distributed co-simulation framework},
  author={Dai, Xinliang and Kocher, Alexander and Kova{\v{c}}evi{\'c}, Jovana and Dindar, Burak and Jiang, Yuning and Jones, Colin and {\c{C}}akmak, H{\"u}seyin K and Hagenmeyer, Veit},
  journal={Electric Power Systems Research},
  volume={235},
  pages={110710},
  year={2024},
  publisher={Elsevier}
}

@article{byrd1997local,
  title={On the local behavior of an interior point method for nonlinear programming},
  author={Byrd, Richard H and Liu, Guanghui and Nocedal, Jorge},
  journal={Numerical analysis},
  volume={1997},
  pages={37--56},
  year={1997}
}

@inproceedings{murray2020grid,
  author={Murray, Alexander and Kyesswa, Michael and Schmurr, Philipp and Çakmak, Hüseyin and Hagenmeyer, Veit},
  booktitle={2020 IEEE PES ISGT (Europe)}, 
  title={On Grid Partitioning in {AC} Optimal Power Flow}, 
  year={2020},
  volume={},
  number={},
  pages={524-528},
  Publisher={IEEE},
  address = {The Hague, Netherlands},
  }

@techreport{gill1991solving,
  title={Solving reduced KKT systems in barrier methods for linear and quadratic programming},
  author={Gill, Philip E and Murray, Walter and Ponceleon, Dulce B and Saunders, Michael A},
  year={1991}
}

@inproceedings{sanders2011engineering,
  title={Engineering multilevel graph partitioning algorithms},
  author={Sanders, Peter and Schulz, Christian},
  booktitle={European Symposium on algorithms},
  pages={469--480},
  year={2011},
  organization={Springer}
}

@inproceedings{murray2021optimal,
  author={Murray, Alexander and Kyesswa, Michael and Çakmak, Hüseyin and Hagenmeyer, Veit},
  booktitle={2021 IEEE Power \& Energy Society Innovative Smart Grid Technologies Conference (ISGT)}, 
  title={On Optimal Grid Partitioning for Distributed Optimization of Reactive Power Dispatch}, 
  year={2021},
  volume={},
  number={},
  pages={1-5},
  Publisher={IEEE},
  address = {Washington, DC, USA},
}

@article{kardovs2022beltistos,
  title={{BELTISTOS}: A robust interior point method for large-scale optimal power flow problems},
  author={Kardo{\v{s}}, Juraj and Kourounis, Drosos and Schenk, Olaf and Zimmerman, Ray},
  journal={Electric Power Systems Research},
  volume={212},
  pages={108613},
  year={2022},
  publisher={Elsevier}
}

@article{patari2021distributed,
  title={Distributed optimization in distribution systems: Use cases, limitations, and research needs},
  author={Patari, Niloy and Venkataramanan, Venkatesh and Srivastava, Anurag and Molzahn, Daniel K and Li, Na and Annaswamy, Anuradha},
  journal={IEEE Transactions on Power Systems},
  volume={37},
  number={5},
  pages={3469--3481},
  year={2021},
  publisher={IEEE}
}

@article{josz2016PEGASE,
  title={{AC} power flow data in {M}ATPOWER and {QCQP} format: {iTesla}, {RTE} snapshots, and {PEGASE}},
  author={Josz, C{\'e}dric and Fliscounakis, St{\'e}phane and Maeght, Jean and Panciatici, Patrick},
  journal={arXiv preprint arXiv:1603.01533},
  year={2016}
}

@article{aravena2023GO,
  title={Recent developments in security-constrained {AC} optimal power flow: Overview of challenge 1 in the {ARPA-E} grid optimization competition},
  author={Aravena, Ignacio and Molzahn, Daniel K and Zhang, Shixuan and Petra, Cosmin G and Curtis, Frank E and Tu, Shenyinying and W{\"a}chter, Andreas and Wei, Ermin and Wong, Elizabeth and Gholami, Amin and others},
  journal={Operations research},
  volume={71},
  number={6},
  pages={1997--2014},
  year={2023},
  publisher={INFORMS}
}

@book{snodgrass2021epigrids,
  title={Tractable algorithms for constructing electric power network models},
  author={Snodgrass, Jonathan M},
  year={2021},
  publisher={The University of Wisconsin-Madison}
}

@article{yang2019survey,
  title={A survey of distributed optimization},
  author={Yang, Tao and Yi, Xinlei and Wu, Junfeng and Yuan, Ye and Wu, Di and Meng, Ziyang and Hong, Yiguang and Wang, Hong and Lin, Zongli and Johansson, Karl H},
  journal={Annual Reviews in Control},
  volume={47},
  pages={278--305},
  year={2019},
  publisher={Elsevier}
}

@article{al2023distributed,
  journal={IEEE Systems Journal}, 
  title={Distributed Optimization and Distributed Learning: A Paradigm Shift for Power Systems}, 
  author={Al-Tawaha, Ahmad and Cibaku, Elson and Park, SangWoo and Lavaei, Javad and Jin, Ming},
year={2025},
  volume={19},
  number={4},
  pages={1038-1051},
}

@inbook{lanza2024ADMM_ALADIN,
  title={Distributed Optimization for Energy Grids: A Tutorial on {ADMM} and {ALADIN}},
  author={Lanza, Lukas and Faulwasser, Timm and Worthmann, Karl},
booktitle = {System Level Control and Optimisation of Microgrids},
chapter = {5},
pages = {121-145},
  year={2025},
  publisher={IET}
}

@article{sun2014masterslave,
  title={Master--slave-splitting based distributed global power flow method for integrated transmission and distribution analysis},
  author={Sun, Hongbin and Guo, Qinglai and Zhang, Boming and Guo, Ye and Li, Zhengshuo and Wang, Jianhui},
  journal={IEEE Transactions on Smart Grid},
  volume={6},
  number={3},
  pages={1484--1492},
  year={2014},
  publisher={IEEE}
}

@article{bertsekas1997nonlinear,
  title={Nonlinear programming},
  author={Bertsekas, Dimitri P},
  journal={Journal of the Operational Research Society},
  volume={48},
  number={3},
  pages={334--334},
  year={1997},
  publisher={Taylor \& Francis}
}

@article{haynsworth1968determination,
  title={Determination of the inertia of a partitioned Hermitian matrix},
  author={Haynsworth, Emilie V},
  journal={Linear algebra and its applications},
  volume={1},
  number={1},
  pages={73--81},
  year={1968},
  publisher={Elsevier}
}

@book{migdalas2013parallel,
  title={Parallel computing in optimization},
  author={Migdalas, Athanasios and Pardalos, Panos M and Stor{\o}y, Sverre},
  volume={7},
  year={2013},
  publisher={Springer Science \& Business Media}
}

@article{schnabel1995view,
  title={A view of the limitations, opportunities, and challenges in parallel nonlinear optimization},
  author={Schnabel, Robert B},
  journal={Parallel computing},
  volume={21},
  number={6},
  pages={875--905},
  year={1995},
  publisher={Elsevier}
}

@article{colombo2009structure,
  title={A structure-conveying modelling language for mathematical and stochastic programming},
  author={Colombo, Marco and Grothey, Andreas and Hogg, Jonathan and Woodsend, Kristian and Gondzio, Jacek},
  journal={Mathematical Programming Computation},
  volume={1},
  pages={223--247},
  year={2009},
  publisher={Springer}
}

@article{watson2012pysp,
  title={PySP: modeling and solving stochastic programs in Python},
  author={Watson, Jean-Paul and Woodruff, David L and Hart, William E},
  journal={Mathematical Programming Computation},
  volume={4},
  pages={109--149},
  year={2012},
  publisher={Springer}
}

@article{rodriguez2023scalable,
  title={Scalable parallel nonlinear optimization with PyNumero and Parapint},
  author={Rodriguez, Jose S and Parker, Robert B and Laird, Carl D and Nicholson, Bethany L and Siirola, John D and Bynum, Michael L},
  journal={INFORMS Journal on Computing},
  volume={35},
  number={2},
  pages={509--517},
  year={2023},
  publisher={INFORMS}
}

@misc{bradbury2018jax,
  author = {Bradbury, James and Frostig, Roy and Hawkins, Peter and
            Johnson, Matthew James and Leary, Chris and Maclaurin, Dougal and
            Necula, George and Paszke, Adam},
  title  = {{JAX}: Composable transformations of {P}ython+{N}um{P}y programs},
  year   = {2018},
  howpublished = {\url{https://github.com/jax-ml/jax}},
  note   = {Software}
}

@article{paszke2019pytorch,
  title={Pytorch: An imperative style, high-performance deep learning library},
  author={Paszke, Adam and Gross, Sam and Massa, Francisco and Lerer, Adam and Bradbury, James and Chanan, Gregory and Killeen, Trevor and Lin, Zeming and Gimelshein, Natalia and Antiga, Luca and others},
  journal={Advances in neural information processing systems},
  volume={32},
  year={2019}
}

@article{shin2024accelerating,
  title={Accelerating optimal power flow with gpus: Simd abstraction of nonlinear programs and condensed-space interior-point methods},
  author={Shin, Sungho and Anitescu, Mihai and Pacaud, Fran{\c{c}}ois},
  journal={Electric Power Systems Research},
  volume={236},
  pages={110651},
  year={2024},
  publisher={Elsevier}
}

@article{pacaud2024parallel,
  title={Parallel interior-point solver for block-structured nonlinear programs on SIMD/GPU architectures},
  author={Pacaud, Fran{\c{c}}ois and Schanen, Michel and Shin, Sungho and Maldonado, Daniel Adrian and Anitescu, Mihai},
  journal={Optimization Methods and Software},
  pages={1--24},
  year={2024},
  publisher={Taylor \& Francis}
}

@article{gould1985practical,
  title={On practical conditions for the existence and uniqueness of solutions to the general equality quadratic programming problem},
  author={Gould, Nicholas IM},
  journal={Mathematical Programming},
  volume={32},
  number={1},
  pages={90--99},
  year={1985},
  publisher={Springer}
}

@phdthesis{Dai2025thesis,
    author       = {Dai, Xinliang},
    year         = {2025},
    title        = {Distributed Optimization Algorithms for Nonlinear Programming in Power Systems},
    doi          = {10.5445/IR/1000187579},
    publisher    = {{Karlsruher Institut für Technologie (KIT)}},
    note         = {37.12.01; LK 01},
    keywords     = {Distributed Optimization, Optimal Power Flow, Large-scale Optimization, Flexibility Aggregation},
    pagetotal    = {179},
    school       = {Karlsruher Institut für Technologie (KIT)},
    language     = {english}
}

@article{fiacco1976sensitivity,
  title={Sensitivity analysis for nonlinear programming using penalty methods},
  author={Fiacco, Anthony V},
  journal={Mathematical programming},
  volume={10},
  number={1},
  pages={287--311},
  year={1976},
  publisher={Springer}
}

@article{khoshfetrat2017distributed,
  title={Distributed primal--dual interior-point methods for solving tree-structured coupled convex problems using message-passing},
  author={Khoshfetrat Pakazad, Sina and Hansson, Anders and Andersen, Martin S and Nielsen, Isak},
  journal={Optimization Methods and Software},
  volume={32},
  number={3},
  pages={401--435},
  year={2017},
  publisher={Taylor \& Francis}
}

@inproceedings{minot2016parallel,
  title={A parallel primal-dual interior-point method for {DC} optimal power flow},
  author={Minot, Ariana and Lu, Yue M and Li, Na},
  booktitle={2016 Power Systems Computation Conference (PSCC)},
  pages={1--7},
  year={2016},
  organization={IEEE}
}

@inproceedings{Ali2024DistributedPI,
  title={Distributed Primal-Dual Interior Point Framework for Analyzing Infeasible Combined Transmission and Distribution Grid Networks},
  author={Muhammad Hamza Ali and Amritanshu Pandey},
  booktitle={Hawaii International Conference on System Sciences},
  year={2024},
  url={https://api.semanticscholar.org/CorpusID:272827820}
}

\ifCLASSOPTIONcaptionsoff
\newpage
\fi

%\begin{IEEEbiography}{Michael Shell}
%Biography text here.
%\end{IEEEbiography}

% if you will not have a photo at all:
%\begin{IEEEbiographynophoto}{John Doe}
%Biography text here.
%\end{IEEEbiographynophoto}

% insert where needed to balance the two columns on the last page with
% biographies
%\newpage

%\begin{IEEEbiographynophoto}{Jane Doe}
%Biography text here.
%\end{IEEEbiographynophoto}
\end{document}